\documentclass[final]{siamltex}
\usepackage{amsfonts}
\usepackage{amsmath,amssymb,epsfig,
color
}

\makeatletter
\@addtoreset{equation}{section}

\makeatother

\newtheorem{example}[theorem]{Example}
\newcommand{\tr}{\mathop{\mathrm{tr}}\limits}


\newcommand{\bff}{{ f}}

\newcommand{\bx}{{x}}

\newcommand{\bv}{{v}}

\newcommand{\range}{\operatorname{range}}

\newcommand{\cF}{{\mathcal F}}

\newcommand{\cH}{{\mathcal H}}

\newcommand{\cL}{{\mathcal L}}

\def\R{ \mathbb {R}}
\def\S{ \mathbb {S}}

\newcommand{\N}{\mathbb{N}}

\begin{document}

\sloppy

\newcounter{fig}

\title{Kernel-based Discretisation  for Solving  Matrix-Valued PDEs}

\author{Peter Giesl\thanks{Department of Mathematics, University of
    Sussex, Falmer BN1 9QH, United Kingdom, {\tt p.a.giesl@sussex.ac.uk}}
         \and
Holger Wendland\thanks{Applied and Numerical Analysis, Department of
  Mathematics, University of 
  Bayreuth, 95440 Bayreuth, Germany, {\tt holger.wendland@uni-bayreuth.de}}}

\date{\today}

\maketitle
\begin{abstract}
In this paper, we discuss the solution of certain matrix-valued
partial differential equations. Such PDEs arise, for example, when
constructing a Riemannian contraction metric for a dynamical system given by an autonomous ODE. We develop and analyse a new meshfree discretisation scheme using
kernel-based approximation spaces. However, since these approximation
spaces have now to be matrix-valued, the kernels we need to use are
fourth order tensors. We will review and extend recent results on even
more general reproducing kernel Hilbert spaces. We will then apply
this general theory to solve a matrix-valued PDE and derive
error estimates for the approximate solution. The paper ends with a
typical example from dynamical systems.
\vspace*{1ex}

{\bf Keywords.} Meshfree Methods, Radial Basis Functions, Autonomous
  Systems, Contraction Metric.

{\bf AMS subject classifications.} 65N35, 65N15, 37B25, 37M99

\end{abstract}

\section{Introduction}

Kernel-based discretisation methods provide an extremely flexible,
general framework to approximate the solution to even rather unconventional
problems (see for example
\cite{Buhmann-03-1,
  Berlinet-Thomas-Agnan-04-1,Wendland-05-1,Fasshauer-07-1,Fornberg-Flyer-15-1, 
  Schaback-Wendland-06-1}). They are meshfree methods, requiring only a
discrete data set for discretising the underlying domain. Since the
kernel can be chosen problem dependent, it is very easy to construct
in particular smooth approximation spaces and high order methods. 

Kernel-based methods have extensively been used for solving partial
differential equation (see for example
\cite{Franke-Schaback-98-2,Kansa-90-1,Flyer-Fornberg-11-1,Wendland-99-2}). They have been used in the context of dynamical systems for constructing
Lyapunov functions (\cite{Giesl-07-1, Giesl-Wendland-07-1}) and they also
play a key role in learning theory
(\cite{Cristianini-Shawe-Taylor-00-1, Cucker-Smale-02-1,
  Micchelli-Pontil-05-1, Schoelkopf-Smola-02-1, Vapnik-98-1,
  Steinwart-Christmann-08-1}) and high-dimensional integration
(see for example \cite{Dick-etal-13-1}) and many other areas.

It is the goal of this paper to derive and analyse a new method for
reconstructing matrix-valued functions
$M:\Omega\subseteq\R^n\to\R^{n\times n}$ from a matrix-valued PDE of the
form 
\begin{equation}\label{general-form}
Df^T(x)M(x)+M(x)Df(x)+M'(x)=-C(x), \qquad x\in\Omega\subseteq\R^n.
\end{equation}
Here, $f$ and $C$ are given functions, defined on a given domain $\Omega\subseteq\R^n$. The function 
$f:\Omega\to\R^n$ is a differentiable vector-valued function with
derivative matrix $Df:\Omega \to \R^{n\times n}$, $C:\Omega \to\R^{n\times n}$ is
a matrix-valued function and $M'$ is 
the so-called orbital derivative, i.e. it is component-wise defined to
be $(M'(x))_{ij} = \nabla M(x)_{ij}\cdot f(x)$. 

Our work is motivated by a typical application of such matrix-valued
PDEs from the theory of dynamical systems. To be more precise, when
studying autonomous 
ODEs of the form $\dot x=f(x)$, then an exponentially stable
equilibrium can be characterised by a Riemannian contraction metric
\cite{Giesl-15-1}:

\begin{theorem}\label{th:1}
Let $\emptyset\not= G\subseteq \R^n$ be a compact, connected and
positively invariant set and $M$ be a Riemannian contraction metric in
$G$, i.e. 
\begin{itemize}
\item $M\in C^1(G,\R^{n\times n})$, such that $M(\bx)$ is
  symmetric and positive definite for all $\bx\in G$. 
\item $D\bff(\bx)^TM(\bx)+M(\bx)D\bff(\bx)+M'(\bx)$ is
  negative definite for all $\bx\in G$.
\end{itemize}
Then there exists one and only one equilibrium in $\bx_0$ in $G$;
$\bx_0$ is exponentially stable and $G$ is a subset of the basin of
attraction $A(\bx_0)$. 
\end{theorem}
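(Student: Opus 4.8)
The plan is to exploit the contraction metric as a device for measuring distances between nearby trajectories and showing they converge exponentially. Equip $G$ with the Riemannian metric induced by $M$: for a piecewise-$C^1$ curve $\gamma$ write its length as $L(\gamma)=\int_0^1\sqrt{\gamma'(s)^TM(\gamma(s))\gamma'(s)}\,ds$, and define the induced distance $d_M(\bx,\by)$ as the infimum of lengths of curves in $G$ joining $\bx$ to $\by$; since $G$ is compact and connected and $M$ is continuous and positive definite, $d_M$ is well-defined and equivalent to the Euclidean metric on $G$. The key computation is to differentiate, along the flow, the quantity $W(t)=\gamma'_t(s)^TM(\gamma_t(s))\gamma'_t(s)$, where $\gamma_t$ is the image of a fixed curve under the time-$t$ flow $\varphi_t$ of $\dot\bx=\bff(\bx)$. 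Because $\tfrac{d}{dt}\gamma_t'(s)=D\bff(\gamma_t(s))\gamma_t'(s)$ and $\tfrac{d}{dt}M(\gamma_t(s))=M'(\gamma_t(s))$ along the flow, one obtains
\begin{equation*}
\frac{d}{dt}W(t)=\gamma_t'(s)^T\bigl(D\bff^T M+MD\bff+M'\bigr)(\gamma_t(s))\,\gamma_t'(s).
\end{equation*}
The negative-definiteness hypothesis, together with compactness of $G$ and positive invariance (so the flow stays in $G$ for $t\ge0$), yields a constant $\nu>0$ with $\tfrac{d}{dt}W(t)\le -2\nu\,W(t)$, hence $W(t)\le W(0)e^{-2\nu t}$. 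Integrating over $s$ and taking infima over connecting curves shows $d_M(\varphi_t(\bx),\varphi_t(\by))\le e^{-\nu t}d_M(\bx,\by)$ for all $\bx,\by\in G$ and $t\ge0$.

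From this contraction property the conclusions follow in turn. First, existence: fix any $\bx\in G$ and consider the sequence $\varphi_k(\bx)$, $k\in\N$; the contraction estimate shows $d_M(\varphi_{k+1}(\bx),\varphi_k(\bx))\le e^{-\nu k}d_M(\varphi_1(\bx),\bx)$, so this is a Cauchy sequence in the complete metric space $(G,d_M)$ and converges to some $\bx_0\in G$; a standard argument (using continuity of the flow and the semigroup property) shows $\bx_0$ is a fixed point of every $\varphi_t$, i.e. an equilibrium of $\dot\bx=\bff(\bx)$. Uniqueness: if $\bx_0,\bx_1$ were two equilibria in $G$, then $d_M(\bx_0,\bx_1)=d_M(\varphi_t(\bx_0),\varphi_t(\bx_1))\le e^{-\nu t}d_M(\bx_0,\bx_1)\to0$, so $\bx_0=\bx_1$. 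Exponential stability and the basin statement: for any $\bx\in G$, $d_M(\varphi_t(\bx),\bx_0)=d_M(\varphi_t(\bx),\varphi_t(\bx_0))\le e^{-\nu t}d_M(\bx,\bx_0)\to0$ as $t\to\infty$, and using equivalence of $d_M$ with the Euclidean norm on $G$ this gives exponential attraction in the usual sense; since $G$ is positively invariant this also shows $G\subseteq A(\bx_0)$, and the same estimate near $\bx_0$ gives Lyapunov stability.

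The main obstacle is the \emph{existence} of an equilibrium: the contraction estimate by itself only gives uniqueness and attraction \emph{assuming} a fixed point exists, and one must genuinely use the completeness of $(G,d_M)$ (equivalently, compactness of $G$) together with positive invariance to produce the limit point and verify it is a true equilibrium rather than merely a limit of a discrete orbit; care is needed to rule out the limit lying on $\partial G$ in a way that breaks the fixed-point argument, which is exactly where positive invariance of $G$ is used. A secondary technical point is justifying that the infimising curves defining $d_M$ may be taken inside $G$ and that differentiation under the flow is legitimate — both handled by compactness of $G$ and smoothness of $\bff$ and $M$, but worth stating carefully.
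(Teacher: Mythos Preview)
The paper does not prove Theorem~\ref{th:1}; it is quoted from \cite{Giesl-15-1} as motivation, with no argument given in the present paper. Hence there is no ``paper's own proof'' to compare against, and your attempt must be judged on its own merits.

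Your approach is the standard and correct one: establish the Gr\"onwall-type decay of the Riemannian length element along the flow, integrate to get the contraction estimate $d_M(\varphi_t(\bx),\varphi_t(\by))\le e^{-\nu t}d_M(\bx,\by)$, and deduce existence of the equilibrium by a Banach fixed-point argument, followed by uniqueness, exponential stability, and $G\subseteq A(\bx_0)$. The key differential identity and the way compactness and positive invariance enter are all handled correctly.

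One genuine technical point deserves more care than you give it. You assert that $d_M$ is well-defined and equivalent to the Euclidean metric because $G$ is compact and connected; but connectedness does not imply path-connectedness, so in principle $d_M(\bx,\by)$ could be infinite. In the setting of \cite{Giesl-15-1} this is not an issue (the sets considered there are nicer, e.g.\ closures of open sets or sublevel sets), and in practice one either adds a mild regularity hypothesis on $G$ or argues locally: first use $V(\bx)=\bff(\bx)^TM(\bx)\bff(\bx)$, whose orbital derivative satisfies $V'\le -2\nu V$, to conclude that every $\omega$-limit point in $G$ is an equilibrium; then use your contraction estimate \emph{locally} in a convex neighbourhood of such a point to obtain uniqueness and exponential stability, and finally conclude $G\subseteq A(\bx_0)$. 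Your closing paragraph shows you are aware of the delicacy here, but the specific gap is path-connectedness of $G$, not merely the location of minimising curves.
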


The difficulty of this approach is to constructively find 
such a contraction metric. In 
\cite{Giesl-15-1} a contraction metric is characterised as the
solution of a first-order PDE of the form (\ref{general-form})
for all $\bx\in \Omega=A(x_0)$,  where $C(x)=C\in
\R^{n\times n}$ is a given symmetric and  positive definite matrix.  The
construction of $M$ is thus a typical example of a matrix-valued PDE
with the additional complication that the solution also has to be
symmetric and positive definite. 

In the accompanying paper \cite{Giesl-Wendland-??-1}, we will prove the
theoretical results required in the dynamical system context. In this
paper, however, we will concentrate on deriving the numerical framework for
discretising even more general PDEs of the form
\begin{equation}\label{fmgeneral}
F(M)(x)=-C(x),\qquad x\in\Omega,
\end{equation}
where $F$ is a rather general differential operator which maps
matrix-valued Sobolev functions of order $\sigma$ to matrix-valued
Sobolev functions of order $\tau<\sigma$. 

The paper is organised as follows.
In Section \ref{sec2} we will review and extend results on general
reproducing kernel Hilbert spaces, going far beyond the usual
definition. In Section \ref{sec:OptRec} we generalise the theory of
optimal recovery to these general reproducing kernel Hilbert
spaces. In Section \ref{sec:2.3} we will become more concrete by
restricting ourselves to reproducing kernel Hilbert spaces of
matrix-valued functions. In Section \ref{sec:2.4} we continue this by
looking at Sobolev spaces of matrix-valued functions. In Section
\ref{sec:2.5} we will derive error estimates for optimal recovery
processes of solutions to (\ref{fmgeneral}). Section \ref{sec3} then
deals with the application to the above mentioned problem to construct
a contraction metric for an autonomous system. The final section gives
a numerical example.

\section{Reproducing Kernel Hilbert Spaces}
\label{sec2}

Reproducing Kernel Hilbert Spaces have first been introduced to
describe real-valued functions $f\colon \Omega\to\R$ on a domain
$\Omega\subseteq \R^d$ (see for example \cite{Aronszajn-50-1}). They
require a kernel $\Phi:\Omega\times \Omega \to \R$ with the
reproduction property 
$f(x)=\langle f, \Phi(\cdot,x)\rangle_\cH$ for $f\in \cH$,
$x\in\Omega$ where $\cH$ denotes a
Hilbert space of functions $f:\Omega\to \R$. 
Later, so-called matrix-valued kernels $\Phi\colon \Omega\times \Omega
\to \R^{n\times n}$  with the reproduction property $f(x)^T\alpha
=\langle f, \Phi(\cdot,x)\alpha \rangle_\cH$, have been introduced to recover
vector-valued functions $f\colon \Omega\to\R^n$ where $\cH$ denotes a
Hilbert space of functions $\Omega\to \R^n$ and $\alpha\in\R^n$ is an arbitrary
vector (see for example \cite{Amodei-97-1,Benbourhim-Bouhamidi-10-1,
  Fuselier-08-1, Lowitzsch-02-1,Narcowich-Ward-94-2,Wendland-09-1}).

We are interested in reproducing kernel Hilbert spaces of
matrix-valued functions. We start with a more general introduction,
namely functions with values in a general Hilbert space $W$, which in
the above examples was $\R$ and $\R^n$, respectively,
and will later be the space $\R^{n\times n}$ of all real $n\times n$
matrices or the space $\S^{n\times n}$ of all symmetric real $n\times
n$ matrices. 
The notion of such general reproducing kernel Hilbert spaces is 
not new, see for example \cite{Micchelli-Pontil-05-1} and the
literature therein.

Let $W$ be a real Hilbert space and denote the linear space of all
linear and bounded operators $L:W\to W$ by $\cL(W)$. For any $L\in
\cL(W)$, we will denote the adjoint operator by $L^*\in \cL(W)$.

Let $\Omega\subseteq \R^d$ be a given domain and let $\cH(\Omega;W)$ be
a Hilbert space of $W$-valued functions $f:\Omega\to W$.

\begin{definition}
\label{def:RKHS} The Hilbert space $\cH(\Omega;W)$ is called a {\em
    reproducing kernel Hilbert space} if there is a function
  $\Phi:\Omega\times\Omega\to \cL(W)$  with
\begin{enumerate}
\item $\Phi(\cdot,x)\alpha \in \cH(\Omega;W)$ for all $x\in\Omega$ and
  all $\alpha\in W$.
\item $\langle f(x),\alpha\rangle_W = \langle f,
  \Phi(\cdot,x)\alpha\rangle_{\cH}$ for all $f\in\cH(\Omega;W)$, all
  $x\in\Omega$ and all $\alpha\in W$.
\end{enumerate}
The function $\Phi$ is called the {\em reproducing kernel} of
$\cH(\Omega;W)$.
\end{definition}

Let us have a short look at two typical examples that have been
introduced at the beginning of this section.

\begin{example}
If we choose $W=\R$ with inner product being just the product, then
$\cH(\Omega;W)$ consists of real-valued functions. Moreover, each
element $L$ of $\cL(\R)$
can be represented by $Lw=\ell w$ with $\ell\in\R$ and thus $\cL(\R)$
can be identified with $\R$. Hence, a reproducing kernel in this
setting has to satisfy $\Phi(\cdot,x)\alpha \in \cH(\Omega;W)$ for all
$\alpha\in\R$, which is obviously equivalent to $\Phi(\cdot,x)\in
\cH(\Omega;W)$, and the second condition is equivalent to $f(x) =
\langle f,\Phi(\cdot,x)\rangle_\cH$. Hence, this is the classical
reproducing kernel used in approximation theory and other areas.
\end{example}

\begin{example}
If we choose $W=\R^n$ with the standard inner product, then $\Phi$ has
to map into the linear mappings from $\R^n\to\R^n$ and can hence be
represented by a matrix. Thus, $\Phi(\cdot,x)\alpha$ represents now a
vector-valued function and the second condition in the definition
becomes
\[
f(x)^T\alpha = \langle f,\Phi(\cdot,x)\alpha\rangle_\cH.
\]
This is usually referred to as matrix-valued kernels in the
literature.
\end{example}

Before we come to our specific situation, we want to point out a few
general results, see also \cite{Micchelli-Pontil-05-1}.

\begin{lemma}\label{le:RKHS} 
\begin{enumerate}
\item The reproducing kernel $\Phi$ of a Hilbert space $\cH(\Omega;W)$
  is uniquely determined.
\item The reproducing kernel satisfies $\Phi(x,y)^*=\Phi(y,x)$ for all
  $x,y\in \Omega$.
\item The reproducing kernel is positive semi-definite,
  i.e. it satisfies
\[
 \sum_{i,j=1}^N\left\langle
\alpha_i,\Phi(x_i,x_j)\alpha_j\right\rangle_W \ge 0
\]
for all $x_1,\ldots,x_N\in\Omega$ and all $\alpha_1,\ldots,\alpha_N\in W$.
\end{enumerate}
\end{lemma}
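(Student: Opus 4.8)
The plan is to prove the three statements in order, each following from the reproduction property in Definition \ref{def:RKHS} combined with standard Hilbert space manipulations, exactly as in the scalar case but keeping track of the operator-valued nature of $\Phi$.

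For the first part (uniqueness), I would suppose that $\Phi$ and $\widetilde\Phi$ are both reproducing kernels for $\cH(\Omega;W)$. Fix $x\in\Omega$ and $\alpha\in W$. Both $\Phi(\cdot,x)\alpha$ and $\widetilde\Phi(\cdot,x)\alpha$ lie in $\cH(\Omega;W)$, so I may apply the reproduction property of, say, $\widetilde\Phi$ to the function $f=\Phi(\cdot,x)\alpha-\widetilde\Phi(\cdot,x)\alpha$: for every $y\in\Omega$ and every $\beta\in W$,
\[
\langle f(y),\beta\rangle_W = \langle f,\widetilde\Phi(\cdot,y)\beta\rangle_{\cH}
= \langle \Phi(\cdot,x)\alpha,\widetilde\Phi(\cdot,y)\beta\rangle_{\cH}-\langle \widetilde\Phi(\cdot,x)\alpha,\widetilde\Phi(\cdot,y)\beta\rangle_{\cH}.
\]
Using the reproduction property of $\Phi$ in the first inner product (with roles of $x,y$ appropriately placed) and of $\widetilde\Phi$ in the second shows both terms equal $\langle\widetilde\Phi(x,y)\beta,\alpha\rangle_W$ after suitable rewriting, hence $f(y)=0$ for all $y$, i.e. $\Phi(\cdot,x)\alpha=\widetilde\Phi(\cdot,x)\alpha$ in $\cH(\Omega;W)$. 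Since $x$ and $\alpha$ are arbitrary, $\Phi=\widetilde\Phi$ as operator-valued functions (evaluating pointwise and using that $W$-valued equality for all $\alpha$ forces operator equality).

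For the second part, I would apply the reproduction property twice with the two kernel sections swapped. For fixed $x,y\in\Omega$ and $\alpha,\beta\in W$, put $f=\Phi(\cdot,y)\beta\in\cH(\Omega;W)$ and evaluate at $x$ against $\alpha$:
\[
\langle \Phi(x,y)\beta,\alpha\rangle_W = \langle \Phi(\cdot,y)\beta,\Phi(\cdot,x)\alpha\rangle_{\cH}.
\]
By symmetry of the roles, $\langle \Phi(y,x)\alpha,\beta\rangle_W = \langle \Phi(\cdot,x)\alpha,\Phi(\cdot,y)\beta\rangle_{\cH}$, and since the real inner product on $\cH$ is symmetric the two right-hand sides agree, giving $\langle \Phi(x,y)\beta,\alpha\rangle_W=\langle\Phi(y,x)\alpha,\beta\rangle_W=\langle\beta,\Phi(y,x)^*\alpha\rangle_W$ wait—more directly, $\langle\Phi(x,y)\beta,\alpha\rangle_W=\langle\beta,\Phi(x,y)^*\alpha\rangle_W$, so comparing gives $\langle\Phi(x,y)^*\alpha,\beta\rangle_W=\langle\Phi(y,x)\alpha,\beta\rangle_W$ for all $\alpha,\beta$, hence $\Phi(x,y)^*=\Phi(y,x)$. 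For the third part, given $x_1,\dots,x_N\in\Omega$ and $\alpha_1,\dots,\alpha_N\in W$, set $g:=\sum_{j=1}^N\Phi(\cdot,x_j)\alpha_j\in\cH(\Omega;W)$ and compute $\norm{g}_{\cH}^2=\sum_{i,j}\langle\Phi(\cdot,x_i)\alpha_i,\Phi(\cdot,x_j)\alpha_j\rangle_{\cH}$; applying the reproduction property to $\Phi(\cdot,x_i)\alpha_i$ evaluated at $x_j$ against $\alpha_j$ (and using part 2 to line up the indices) rewrites each summand as $\langle\alpha_i,\Phi(x_i,x_j)\alpha_j\rangle_W$, so the double sum equals $\norm{g}_{\cH}^2\ge 0$.

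The steps are all routine; the only point requiring a little care is the bookkeeping of which argument of $\Phi$ is fixed when invoking the reproduction property, together with the consistent use of part 2 ($\Phi(x,y)^*=\Phi(y,x)$) to bring the indices into the form stated in the lemma. There is no genuine obstacle here—this is the operator-valued analogue of the classical argument, and the real-Hilbert-space setting means we never need to worry about conjugation.
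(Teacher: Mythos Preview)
Your proposal is correct and follows essentially the same route as the paper: uniqueness via the reproduction property applied to the difference of two candidate kernels, the adjoint relation by taking $f=\Phi(\cdot,y)\beta$ in the reproduction formula and using symmetry of the real $\cH$-inner product, and positive semi-definiteness by recognising the double sum as $\bigl\|\sum_j\Phi(\cdot,x_j)\alpha_j\bigr\|_{\cH}^2$. Aside from a small detour in part 2 (the ``wait---more directly'' line) and a slightly roundabout formulation of part 1, there is no substantive difference from the paper's argument.
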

\begin{proof}
The first property is proven as in the classical reproducing kernel
setting by assuming that there are two kernels and showing that they
have to be the same using the reproduction property.
%
The second property follows by setting $f=\Phi(\cdot,y)\beta$ in the
reproduction formula. This yields
\begin{eqnarray*}
\langle \Phi(x,y)\beta,\alpha\rangle_W &=& \langle
\Phi(\cdot,y)\beta,\Phi(\cdot,x)\alpha\rangle_\cH=\langle
\Phi(\cdot,x)\alpha,\Phi(\cdot,y)\beta\rangle_\cH\\
& =& \langle
\Phi(y,x)\alpha,\beta\rangle_W = \langle \beta, \Phi(y,x)\alpha\rangle_W.
\end{eqnarray*}
The third property simply follows from
\begin{eqnarray*}
\sum_{i,j=1}^N\left\langle
\alpha_i,\Phi(x_i,x_j)\alpha_j\right\rangle_W
&=&
 \sum_{i,j=1}^N\left\langle
\Phi(x_j,x_i)\alpha_i,\alpha_j\right\rangle_W 
 = \sum_{i,j=1}^N
\langle \Phi(\cdot,x_i)\alpha_i, \Phi(\cdot,x_j)\alpha_j\rangle_\cH \\
&=&
\left\|\sum_{i=1}^N \Phi(\cdot,x_i)\alpha_i\right\|_{\cH}^2 \ge 0.
\end{eqnarray*}
\hfill\end{proof}

In most cases, the kernel is even positive definite, namely if the
functions $\Phi(\cdot,x_j)\alpha_j$ are linearly independent.

\begin{definition}\label{def:posdef}
A kernel $\Phi:\Omega\times\Omega\to\cL(W)$ which satisfies
$\Phi(x,y)^*=\Phi(y,x)$ for all $x,y\in\Omega$ is called {\em positive
definite} if for all $N\in\N$, for all $x_1,\ldots,x_N\in\Omega$, pairwise
distinct, and for all $\alpha_1,\ldots,\alpha_N\in W$, not all of them
zero, we have
\[
 \sum_{i,j=1}^N\left\langle
\alpha_i,\Phi(x_i,x_j)\alpha_j\right\rangle_W >0.
\]
\end{definition}

As usual in the theory of reproducing kernel Hilbert spaces, it is
also possible to start with a kernel and to build its Hilbert space
from scratch. This is done as follows. Suppose we have a positive
definite kernel $\Phi:\Omega\times\Omega\to \cL(W)$ as in
Definition \ref{def:posdef}. Then, we can form the space
\[
\cF_\Phi(\Omega;W)=\mbox{span}\left\{
\Phi(\cdot,x)\alpha : x\in\Omega, \alpha\in W\right\}
\]
and equip this space with an inner product defined by
\[
\langle \Phi(\cdot,x)\alpha, \Phi(\cdot,y)\beta\rangle_\Phi:=\langle
\Phi(x,y)\beta, \alpha\rangle_W.
\]
The closure of $\cF_\Phi(\Omega;W)$ with respect to the norm induced by
this inner product is then the corresponding Hilbert space
$\cH(\Omega;W)$ for which $\Phi$ is the reproducing kernel.

\section{Optimal Recovery}\label{sec:OptRec}

If $\Phi:\Omega\times\Omega\to \cL(W)$ is a positive definite kernel,
then this immediately implies that we can solve the following
interpolation problem uniquely.

\begin{theorem}
If $x_1,\ldots,x_N$ are pairwise distinct points from $\Omega$ and if
$f_1,\ldots, f_N\in W$ are given, then there is exactly one
interpolant of the form
\[
s_f(x) = \sum_{j=1}^N \Phi(x,x_j)\alpha_j
\]
which satisfies $s_f(x_i)=f_i$, $1\le i\le N$.
\end{theorem}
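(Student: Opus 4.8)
The plan is to turn the interpolation requirement into a linear system for the coefficients $\alpha_1,\dots,\alpha_N\in W$ and to show that the associated block operator is invertible. Inserting the ansatz $s_f(x)=\sum_{j=1}^N\Phi(x,x_j)\alpha_j$ into the conditions $s_f(x_i)=f_i$ yields
\[
\sum_{j=1}^N\Phi(x_i,x_j)\alpha_j=f_i,\qquad 1\le i\le N,
\]
which I would rewrite as $A\boldsymbol\alpha=\mathbf f$ on the Hilbert space $W^N:=W\times\cdots\times W$ (equipped with $\langle\boldsymbol\alpha,\boldsymbol\beta\rangle_{W^N}=\sum_{i=1}^N\langle\alpha_i,\beta_i\rangle_W$), where $A=\bigl(\Phi(x_i,x_j)\bigr)_{i,j=1}^N\in\cL(W^N)$, $\boldsymbol\alpha=(\alpha_1,\dots,\alpha_N)$ and $\mathbf f=(f_1,\dots,f_N)$. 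The theorem is then equivalent to saying that $A$ is a bijection of $W^N$.

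Next I would record two properties of $A$. First, $A$ is self-adjoint: using Lemma~\ref{le:RKHS}, namely $\Phi(x_i,x_j)^*=\Phi(x_j,x_i)$, one gets
\[
\langle A\boldsymbol\alpha,\boldsymbol\beta\rangle_{W^N}=\sum_{i,j}\langle\Phi(x_i,x_j)\alpha_j,\beta_i\rangle_W=\sum_{i,j}\langle\alpha_j,\Phi(x_j,x_i)\beta_i\rangle_W=\langle\boldsymbol\alpha,A\boldsymbol\beta\rangle_{W^N}.
\]
Second, $A$ is strictly positive: since $x_1,\dots,x_N$ are pairwise distinct and $\Phi$ is positive definite in the sense of Definition~\ref{def:posdef},
\[
\langle\boldsymbol\alpha,A\boldsymbol\alpha\rangle_{W^N}=\sum_{i,j=1}^N\langle\alpha_i,\Phi(x_i,x_j)\alpha_j\rangle_W>0
\]
for every $\boldsymbol\alpha\ne 0$. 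In particular $A$ is injective, which already gives the \emph{uniqueness} part: any two interpolants of the prescribed form have coefficient vectors that both solve $A\boldsymbol\alpha=\mathbf f$, hence agree.

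For \emph{existence} I would invoke finite-dimensionality. In all situations of interest in this paper $W$ is $\R^{n\times n}$ or $\S^{n\times n}$, so $W^N$ is finite-dimensional, and there the injective self-adjoint operator $A$ is automatically surjective — equivalently, the Gram matrix of the quadratic form above is symmetric and, by strict positivity, positive definite, hence invertible — so $\boldsymbol\alpha=A^{-1}\mathbf f$ produces the desired interpolant. The one place where care is genuinely needed, and the main obstacle to a fully general statement, is the infinite-dimensional case: strict positivity of a bounded self-adjoint operator does not force invertibility there, since $0$ may lie in the continuous spectrum, so one would have to add a coercivity bound $\langle\boldsymbol\alpha,A\boldsymbol\alpha\rangle_{W^N}\ge c\,\|\boldsymbol\alpha\|_{W^N}^2$, which is not implied by Definition~\ref{def:posdef} alone. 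Since $\dim W<\infty$ in all of our applications, the finite-dimensional argument is enough.
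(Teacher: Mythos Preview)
Your argument is correct and follows the same strategy as the paper: form the block operator $A=(\Phi(x_i,x_j))$ on $W^N$, show it is self-adjoint via $\Phi(x,y)^*=\Phi(y,x)$, and deduce injectivity from positive definiteness. The only difference is in how surjectivity is obtained: the paper invokes the Hilbert space identity $\ker(A^*)=\range(A)^\bot$ together with $A=A^*$ to conclude ``injective iff surjective'', whereas you appeal directly to finite-dimensionality of $W$. Your caution here is well placed --- the identity $\ker(A^*)=\range(A)^\bot$ on its own only gives dense range from injectivity, and one still needs closed range (automatic when $\dim W<\infty$) to get surjectivity --- so your version makes the hidden hypothesis explicit.
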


\begin{proof}
Let $W^N$ denote the Cartesian product of the Hilbert space $W$. Then,
$W^N$ becomes a Hilbert space itself if equipped with the inner
product
\[
\langle \alpha,\beta\rangle_{W^N}= \sum_{j=1}^N \langle
\alpha_j,\beta_j\rangle_W.
\]
The {\em matrix} $A:=(\Phi(x_i,x_j))_{1\le i,j\le N}$ defines a linear
mapping $A:W^N\to W^N$, which is self-adjoint because of the second statement in
Lemma \ref{le:RKHS}: 
\begin{eqnarray*}
\langle A \alpha,\beta\rangle_{W^N} & = & \sum_{i=1}^N \langle
(A\alpha)_i,\beta_i\rangle_W = \sum_{i=1}^N\sum_{j=1}^N \langle 
\Phi(x_i,x_j)\alpha_j,\beta_i\rangle_W\\
& = & \sum_{i=1}^N \sum_{j=1}^N \langle \alpha_j,
\Phi(x_j,x_i)\beta_i\rangle_W
= \sum_{j=1}^N \langle \alpha_j, (A\beta)_j\rangle_W\\
& = & \langle \alpha,A\beta\rangle_{W^N}.
\end{eqnarray*}
Thus, the relation $\ker(A^*)=\range(A)^\bot$ shows together with
$A=A^*$ that $A$ is injective if and only if $A$ is surjective. 
But injectivity follows directly from the fact that $\Phi$ is positive
definite. 
\end{proof}

Within this general framework, we now want to discuss the more general
concept of optimal recovery. Hence, let $\cH(\Omega;W)$ be our
reproducing kernel
Hilbert space with reproducing kernel $\Phi:\Omega\times\Omega\to
\cL(W)$. As usual, we denote the dual of $\cH(\Omega;W)$ by
$\cH(\Omega;W)^*$.

\begin{definition}\label{optrec}
Given $N$ linearly independent functionals
$\lambda_1,\ldots,\lambda_N\in \cH(\Omega;W)^*$ and $N$ values
$f_1=\lambda_1(f),\ldots, f_N=\lambda_N(f)\in\R$ generated by an
element $f\in\cH(\Omega;W)$. The optimal recovery of $f$ based on this
information is defined to be the element $s^*\in\cH(\Omega;W)$ which
solves
\[
\min \left\{\|s\|_\cH : s\in\cH(\Omega;W) \mbox{ with
}\lambda_j(s)=f_j, 1\le j\le N\right\}.
\]
\end{definition}
The solution to this minimisation problem is well-known and follows
directly from standard Hilbert space theory; it works in any Hilbert
space, not 
only in reproducing kernel Hilbert spaces. We quote the following
result from \cite[Theorem 16.1]{Wendland-05-1}:
\begin{theorem}\label{th:mini}
Let $H$ be a Hilbert space. Let
  $\lambda_1,\ldots,\lambda_N\in H^*$ be linearly independent linear
  functionals with Riesz representers $v_1,\ldots,v_N\in H$. Then
  the element $s^*\in H $ which solves
\[
\min \{\|s\|_H : s\in H \mbox{ with } \lambda_j(s) = f_j, 1\le j\le
N\}
\]
is given by
\[
s^* = \sum_{k=1}^N \beta_k v_k,
\]
where the coefficients $\beta_k\in\R$ are determined by the
generalised interpolation conditions $\lambda_i(s^*)=f_i$, $1\le i\le
N$, which lead to the linear system $A_\Lambda \beta = f$ with the
positive definite matrix $A_\Lambda = (a_{ik})$ having entries
$a_{ik} = \lambda_i(v_k)=\langle v_k,v_i\rangle_H$.
\end{theorem}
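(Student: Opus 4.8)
The plan is to reduce the problem to the geometry of a closed subspace of a Hilbert space and invoke the orthogonal projection theorem. Let $V := \spann\{v_1,\ldots,v_N\}\subseteq H$, which is finite-dimensional and hence closed. The affine set $K := \{s\in H : \lambda_j(s)=f_j,\ 1\le j\le N\}$ is nonempty (since $f$ itself lies in it, using the data values generated by $f$, or more directly because the $\lambda_j$ are linearly independent so the moment map onto $\R^N$ is surjective), and by the Riesz representation $\lambda_j(s)=\langle s,v_j\rangle_H$, so $K$ is a translate of $V^\perp$: if $s_0\in K$ is any fixed element, then $K = s_0 + V^\perp$. The element of minimal norm in such an affine subspace is exactly the orthogonal projection of $0$ onto $K$, equivalently the unique point of $K$ lying in $(V^\perp)^\perp = V$ (using that $V$ is closed, so $(V^\perp)^\perp = V$).

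So the first key step is: the minimiser $s^*$ exists, is unique, and is characterised by $s^*\in V$ together with $s^*\in K$. Uniqueness and existence follow from the projection theorem applied to the closed convex (indeed affine) set $K$; the characterisation $s^*\in V$ follows because $s^* - s_0 \in V^\perp$ must itself be the minimal-norm element of the coset, i.e. the component of $-s_0$ in $V$, and a short orthogonality computation (for any $w\in V^\perp$, $\|s^*+w\|^2 = \|s^*\|^2 + \|w\|^2$ since $s^*\in V$) shows $s^*$ beats every competitor. This immediately gives the representation $s^* = \sum_{k=1}^N \beta_k v_k$ for some real coefficients $\beta_k$ — the $v_k$ need not be linearly independent a priori, but linear independence of the $\lambda_j$ forces linear independence of their Riesz representers $v_j$ (the Riesz map is an isometric isomorphism $H^*\to H$), so the $\beta_k$ are in fact uniquely determined.

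The second step is to pin down the coefficients. Imposing the constraints $\lambda_i(s^*)=f_i$ and expanding via $\lambda_i(v_k)=\langle v_k,v_i\rangle_H$ gives the linear system $\sum_k \langle v_k,v_i\rangle_H\,\beta_k = f_i$, i.e. $A_\Lambda\beta = f$ with $A_\Lambda=(a_{ik})$, $a_{ik}=\langle v_k,v_i\rangle_H$. This Gram matrix is symmetric, and it is positive definite precisely because $v_1,\ldots,v_N$ are linearly independent: $\beta^T A_\Lambda \beta = \|\sum_k \beta_k v_k\|_H^2 > 0$ for $\beta\ne 0$. Hence the system has a unique solution and the construction is complete.

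The main obstacle here is essentially bookkeeping rather than depth: one must be careful that the problem is genuinely over a real Hilbert space (the functionals take real values, consistent with Definition \ref{optrec}), that $K$ is nonempty, and that the passage from "minimal norm in a coset of $V^\perp$" to "the unique representative in $V$" is justified by closedness of the finite-dimensional $V$. None of these is hard, but the cleanest exposition is to first prove the abstract projection statement ($K = s_0 + V^\perp$ has a unique minimal-norm element, namely its unique element in $V$) and only afterwards translate back through the Riesz isometry to recover linear independence of the $v_k$ and the Gram-matrix system; doing it in that order avoids circularity and keeps each step to one or two lines.
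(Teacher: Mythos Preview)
Your argument is correct and is precisely the standard proof: identify the feasible set as a coset of $V^\perp$, invoke the projection theorem to get existence and uniqueness with $s^*\in (V^\perp)^\perp=V$, and then read off the Gram system from the interpolation constraints, with positive definiteness coming from linear independence of the Riesz representers. Note, however, that the paper does not actually prove this theorem at all; it is quoted verbatim from \cite[Theorem~16.1]{Wendland-05-1}, so there is no in-paper argument to compare against --- your write-up would serve perfectly well as the omitted proof.
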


Returning to our specific situation $H=\cH(\Omega;W)$, to apply this
theorem, we will need to know the Riesz representers of our
functionals $\lambda\in \cH(\Omega;W)^*$.  We start with rather
specific functionals.

\begin{lemma} Let $\lambda\in \cH(\Omega;W)^*$ be of the form
$\lambda(f)=\langle f(x),\alpha\rangle_W$, $f\in \cH(\Omega;W)$
with fixed $x\in\Omega$ and $\alpha\in W$. Then,
$\lambda=\lambda_{x,\alpha}$ has the Riesz representer
\[
v_\lambda = \Phi(\cdot,x)\alpha\in \cH(\Omega;W).
\]
\end{lemma}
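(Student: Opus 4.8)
The plan is to verify the two defining properties of a reproducing kernel Hilbert space directly for the candidate representer $v_\lambda = \Phi(\cdot,x)\alpha$, and then to invoke uniqueness of the Riesz representer. First I would note that $v_\lambda = \Phi(\cdot,x)\alpha$ indeed lies in $\cH(\Omega;W)$: this is precisely the first property in Definition~\ref{def:RKHS}, so no work is needed there. Then, for arbitrary $f\in\cH(\Omega;W)$, I would compute $\langle f, v_\lambda\rangle_\cH = \langle f, \Phi(\cdot,x)\alpha\rangle_\cH$, and the reproduction property (the second item in Definition~\ref{def:RKHS}) gives immediately that this equals $\langle f(x),\alpha\rangle_W = \lambda(f)$. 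Hence $v_\lambda$ represents $\lambda$ in the sense of the Riesz representation theorem.

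The only remaining point is that the Riesz representer is \emph{the} Riesz representer, i.e.\ it is unique. This follows from the standard Riesz representation theorem in Hilbert spaces: for a fixed bounded linear functional $\lambda$ there is exactly one $v\in\cH(\Omega;W)$ with $\lambda(f)=\langle f,v\rangle_\cH$ for all $f$. Since we have just exhibited $\Phi(\cdot,x)\alpha$ as such a $v$, it must coincide with the Riesz representer $v_\lambda$. One small thing worth remarking is that the hypothesis $\lambda\in\cH(\Omega;W)^*$ already guarantees boundedness, so there is nothing to check regarding continuity; alternatively, boundedness of $\lambda_{x,\alpha}$ could be read off from the identity we derive together with the Cauchy--Schwarz inequality, $|\lambda(f)| = |\langle f,\Phi(\cdot,x)\alpha\rangle_\cH| \le \|f\|_\cH \, \|\Phi(\cdot,x)\alpha\|_\cH$.

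There is essentially no obstacle here; the lemma is a one-line consequence of the reproduction property, exactly as in the classical scalar theory. If anything, the only subtlety is purely notational: one must be careful that the inner product $\langle\,\cdot\,,\,\cdot\,\rangle_W$ appearing in the functional is the one on the target space $W$, while $\langle\,\cdot\,,\,\cdot\,\rangle_\cH$ is the one on the function space, and that the reproduction formula in Definition~\ref{def:RKHS} is stated in exactly the order needed (no adjoint of $\Phi$ is required, since $\Phi(\cdot,x)\alpha$ is used as the second argument of $\langle\,\cdot\,,\,\cdot\,\rangle_\cH$). I would therefore write the proof as a short two-line display invoking Definition~\ref{def:RKHS} and the uniqueness part of the Riesz representation theorem.
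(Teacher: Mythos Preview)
Your proof is correct and is in fact more direct than the paper's. The paper applies $\lambda_{x,\alpha}$ only to functions of the special form $f=\Phi(\cdot,y)\beta$, checks that the result agrees with $\langle \Phi(\cdot,y)\beta,\Phi(\cdot,x)\alpha\rangle_\cH$, and then invokes the density of such functions in $\cH(\Omega;W)$ to conclude. You instead use the reproduction property of Definition~\ref{def:RKHS} for arbitrary $f$ in one step, which is exactly what that property says; no density argument is needed. Your route is shorter and avoids the slightly circular flavour of verifying an identity on a dense set when the identity is already stated for all $f$ in the definition. The paper's approach would be the natural one if the reproduction property were only known on a spanning set, but here that is not the case.
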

\begin{proof}
This simply follows from applying $\lambda_{x,\alpha}$ to the specific
function $f=\Phi(\cdot,y)\beta$ with $y\in\Omega$ and $\beta\in
W$. Using the definition of the functional and the reproducing kernel
property yields
\[
\lambda_{x,\alpha}(\Phi(\cdot,y)\beta)  =  \langle
\Phi(x,y)\beta,\alpha\rangle_W 
 =  \langle \Phi(\cdot,y)\beta,\Phi(\cdot,x)\alpha\rangle_{\cH}.
\]
However, we also have by the Riesz representation theorem that
\[
\lambda_{x,\alpha}(\Phi(\cdot,y)\beta) = \langle \Phi(\cdot,y)\beta,
v_\lambda\rangle_{\cH}.
\]
Since the functions $\Phi(\cdot,y)\beta$ are dense in $\cH(\Omega;W)$,
this gives $v_\lambda = \Phi(\cdot,x)\alpha$.
\end{proof}

The result for arbitrary functionals can be reduced to this special case.

\begin{proposition}\label{pro:Riesz}
Assume that $\{\alpha_j\}_{j\in J}$ is an orthonormal basis of
$W$. Then, the Riesz representer of a functional $\lambda\in
\cH(\Omega;W)^*$ is given by
\[
v_\lambda(x) = \sum_{j\in J} \lambda(\Phi(\cdot,x)\alpha_j)\alpha_j,
\qquad x\in \Omega.
\]
\end{proposition}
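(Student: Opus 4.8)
The plan is to verify the Riesz representation identity $\lambda(f) = \langle f, v_\lambda \rangle_\cH$ for all $f \in \cH(\Omega;W)$, where $v_\lambda$ is defined by the claimed series. First I would check that the series defining $v_\lambda(x)$ actually converges in $\cH(\Omega;W)$: writing $v_\lambda = \sum_{j\in J} \lambda(\Phi(\cdot,\cdot)\alpha_j)\alpha_j$ is slightly informal, so the cleaner route is to recognise that for each fixed $x$, $\lambda(\Phi(\cdot,x)\alpha_j)$ are the expansion coefficients of some vector in $W$ against the orthonormal basis $\{\alpha_j\}$, and to show the corresponding function lies in $\cH$. Rather than analysing this pointwise series directly, I would use the preceding lemma: $\Phi(\cdot,x)\alpha_j$ is the Riesz representer of the evaluation functional $\lambda_{x,\alpha_j}$, so $\lambda(\Phi(\cdot,x)\alpha_j) = \langle v_\lambda, \Phi(\cdot,x)\alpha_j\rangle_\cH = \langle v_\lambda(x), \alpha_j\rangle_W$ where $v_\lambda$ now denotes \emph{the} Riesz representer guaranteed by the Riesz representation theorem. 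Expanding $v_\lambda(x) \in W$ in the orthonormal basis then gives exactly $v_\lambda(x) = \sum_{j\in J}\langle v_\lambda(x),\alpha_j\rangle_W\,\alpha_j = \sum_{j\in J}\lambda(\Phi(\cdot,x)\alpha_j)\,\alpha_j$, which is the asserted formula.

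So the argument has essentially two steps. Step one: invoke the Riesz representation theorem to obtain a representer $v_\lambda \in \cH(\Omega;W)$ with $\lambda(g) = \langle g, v_\lambda\rangle_\cH$ for all $g$. Step two: apply this with $g = \Phi(\cdot,x)\alpha_j$, use the reproduction property (Definition~\ref{def:RKHS}, part 2) to identify $\langle \Phi(\cdot,x)\alpha_j, v_\lambda\rangle_\cH = \langle \alpha_j, v_\lambda(x)\rangle_W$, and then reconstruct $v_\lambda(x)$ from its Fourier coefficients with respect to $\{\alpha_j\}_{j\in J}$, using Parseval to justify convergence (the coefficients are square-summable precisely because $v_\lambda(x)\in W$). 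This gives the stated pointwise formula.

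The main obstacle I anticipate is purely a matter of care with the general (possibly infinite-dimensional, possibly non-separable) Hilbert space $W$: one must make sure the orthonormal basis expansion of $v_\lambda(x)$ is legitimate and that the sum is interpreted as an unconditionally convergent series in $W$ with only countably many nonzero terms for each $x$. There is also a mild subtlety that the formula as written presents $v_\lambda$ as a function of $x$, so one should note that the right-hand side, as a function of $x$, agrees with the element $v_\lambda\in\cH(\Omega;W)$ at every point, which is immediate once the Fourier reconstruction is in place since two elements of $\cH(\Omega;W)$ agreeing pointwise are equal. Apart from these bookkeeping points, the proof is a direct consequence of the Riesz theorem combined with the reproduction property and the preceding lemma.
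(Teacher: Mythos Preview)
Your proposal is correct and follows essentially the same approach as the paper: start from the Riesz representer $v_\lambda\in\cH(\Omega;W)$, expand $v_\lambda(x)\in W$ in the orthonormal basis $\{\alpha_j\}$, and identify each Fourier coefficient $\langle v_\lambda(x),\alpha_j\rangle_W$ with $\lambda(\Phi(\cdot,x)\alpha_j)$ via the reproduction property. The paper's proof is terser and omits the bookkeeping remarks about convergence in non-separable $W$, but the logical structure is identical.
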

\begin{proof}
Since $v_\lambda(x)\in W$ for every $x\in\Omega$ and since
$\{\alpha_j\}_{j\in J}$ is an orthonormal basis of $W$, we can expand
$v_\lambda(x)$ within this basis using its Fourier representation
\[
v_\lambda(x) = \sum_{j\in J} \langle v_\lambda(x),\alpha_j\rangle_W
\alpha_j.
\]
The result then follows immediately from the reproducing kernel
property:
\[
\langle v_\lambda(x),\alpha_j\rangle_W = \langle v_\lambda,
\Phi(\cdot,x)\alpha_j\rangle_{\cH} =\langle
\Phi(\cdot,x)\alpha_j,v_\lambda\rangle_{\cH} =\lambda(\Phi(\cdot,x)\alpha_j).
\]
\hfill\end{proof}

Thus, the optimal recovery problem can be recast as a linear
system. From now on, we will write $\lambda^y(\Phi(y,x)\alpha)$ to
indicate that the functional $\lambda$ acts on the variable $y$ of
the kernel.

\begin{corollary}\label{cor:Riesz}
Assume that $\{\alpha_j\}_{j\in J}$ is an orthonormal basis of $W$.
The solution of the minimisation problem of Theorem \ref{th:mini} is given by
\[
s^* = \sum_{k=1}^N \beta_k \sum_{j\in J} \lambda_k^y
(\Phi(y,\cdot)\alpha_j)\alpha_j,
\]
and the coefficients $\beta_k\in\R$ are determined by
\[
\sum_{k=1}^N\lambda_i^x\left[\lambda_k^y \sum_{j\in J}\left(
  \Phi(y,x)\alpha_j\right)\alpha_j\right] \beta_k = f_i, \qquad 1\le
i\le N.
\]
\end{corollary}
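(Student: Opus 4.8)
The plan is simply to splice together Theorem~\ref{th:mini} and Proposition~\ref{pro:Riesz}; no new ingredient is needed. First I would apply Theorem~\ref{th:mini} with $H=\cH(\Omega;W)$: it says that the optimal recovery is $s^*=\sum_{k=1}^N\beta_k v_k$, where $v_k\in\cH(\Omega;W)$ is the Riesz representer of $\lambda_k$ and the coefficient vector $\beta=(\beta_1,\dots,\beta_N)^T$ is the unique solution of the linear system $A_\Lambda\beta=f$ with positive definite matrix $A_\Lambda=(a_{ik})$, $a_{ik}=\lambda_i(v_k)$. Thus everything reduces to inserting the explicit form of the representers $v_k$.

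Next I would invoke Proposition~\ref{pro:Riesz} for each $\lambda_k$. Since $\{\alpha_j\}_{j\in J}$ is an orthonormal basis of $W$, the proposition gives, for every $x\in\Omega$, the identity $v_k(x)=\sum_{j\in J}\lambda_k(\Phi(\cdot,x)\alpha_j)\alpha_j=\sum_{j\in J}\lambda_k^y(\Phi(y,x)\alpha_j)\alpha_j$, where the superscript $y$ records that $\lambda_k$ acts on the first slot of the kernel. Substituting this into $s^*=\sum_{k=1}^N\beta_k v_k$ produces exactly the claimed formula for $s^*$ (and, being a finite sum of elements of $\cH(\Omega;W)$, this identity holds in $\cH(\Omega;W)$, not merely pointwise). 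For the linear system I would carry out the same substitution in $a_{ik}=\lambda_i(v_k)$: since $v_k$ is precisely the function $x\mapsto\sum_{j\in J}\lambda_k^y(\Phi(y,x)\alpha_j)\alpha_j$, evaluating the functional $\lambda_i$ on it — acting now on the variable $x$ — yields $a_{ik}=\lambda_i^x\big[\sum_{j\in J}\lambda_k^y(\Phi(y,x)\alpha_j)\alpha_j\big]$, and $A_\Lambda\beta=f$ is then literally the displayed system, with solvability and uniqueness of $\beta$ already guaranteed by Theorem~\ref{th:mini}.

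Since this corollary is essentially a bookkeeping restatement of the two previous results, there is no substantial obstacle; the only point deserving a word of care is the case where $W$ is infinite-dimensional, so that the index set $J$ is infinite and the series for $v_k$ is a genuine infinite sum. Here I would note that $v_k$ is a well-defined element of $\cH(\Omega;W)$ and $\lambda_i\in\cH(\Omega;W)^*$ is bounded, so $\lambda_i(v_k)$ is unambiguous and the displayed expression is just this scalar written out via the series representation of $v_k$ from Proposition~\ref{pro:Riesz}; pulling $\lambda_i^x$ through the sum termwise, if one wishes to do so, is justified by continuity of $\lambda_i$ together with the convergence furnished by that proposition. In the matrix-valued setting $W=\R^{n\times n}$ that we actually use, $J$ is finite and even this mild subtlety disappears.
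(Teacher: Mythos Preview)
Your proposal is correct and is exactly the approach the paper takes: the corollary is stated without proof in the paper, as it is an immediate consequence of substituting the Riesz representer formula of Proposition~\ref{pro:Riesz} into the abstract solution formula of Theorem~\ref{th:mini}. Your additional remark on the infinite-dimensional case is a reasonable clarification that the paper does not spell out.
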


\begin{example}
Let us again have look at vector-valued functions, i.e. we let
$W=\R^n$. Then, we can choose the standard basis
$(e_j)_{j=1,\ldots,n}$ of $\R^n$ as the 
orthonormal basis and hence, the optimal recovery is given by
\[
s^* = \sum_{k=1}^N \beta_k \sum_{j=1}^n
\lambda_k^y(\Phi(y,\cdot)e_j)e_j.
\]
Here, $\Phi(x,y)\in\R^{n\times n}$ is a matrix and thus
$\Phi(x,y)e_j$ gives the $j$th column  of this matrix. This shows that
the expression $\lambda_k^y(\Phi(y,\cdot)e_j)$ means applying
$\lambda_k$ to the $j$th column (or row since $\Phi$ is symmetric) of
$\Phi(y,\cdot)$ with respect to $y$. Hence, we can define
$\lambda_k^y\Phi(y,\cdot)$ simply by applying 
$\lambda_k^y$ to each column/row of $\Phi(y,\cdot)$, which altogether
results into a vector. Moreover, with this definition, we can simply write
\[
\sum_{j=1}^n \lambda_k^y(\Phi(y,\cdot)e_j)e_j =
\lambda_k^y\Phi(y,\cdot)
\]

and hence
\[
s^* = \sum_{k=1}^N \beta_k\lambda_k^y\Phi(y,\cdot),
\]
which is a vector-valued function.
Finally, the coefficients $\beta_k$ are simply determined by solving
$A\beta=f$ with $A$ having entries
$\lambda_i^x\lambda_k^y\Phi(y,x)$.
\end{example}

After establishing the general theory, we will in the following
sections consider special cases. In particular, we will choose $W$ to
be the space $\R^{n\times n}$ of real-valued $n\times n$ matrices or
its subspace $\S^{n\times n}$ of symmetric matrices (Section
\ref{sec:2.3}). Then we will consider specific RKHS spaces, namely
matrix-valued Sobolev spaces $H^\sigma(\Omega;\S^{n\times n}) $ in
Section \ref{sec:2.4}, where the kernel is built from the kernel of
the corresponding real-valued Sobolev space. Finally, we will consider
functionals of the form $\lambda_k^{(i,j)} (M):=e_i^TF(M)(x_k)e_j$,
where $F\colon H^\sigma(\Omega;\S^{n\times n}) \to
H^\tau(\Omega;\S^{n\times n}) $ is a linear and bounded operator, in
particular differential operator, and derive error estimates in
Section \ref{sec:2.5}. 
In Section \ref{sec3}, a specific linear operator $F$ from dynamical
systems will be considered. 

\section{Matrix-Valued Theory} \label{sec:2.3}

We are now interested in matrix-valued functions, i.e. we set
$W=\R^{n\times n}$ or $W=\S^{n\times n}$, the space of all symmetric
$n\times n$ matrices. On $W$ we define the following inner product to make
it a Hilbert space.
\begin{eqnarray}
\langle \alpha,\beta \rangle_W &=&\sum_{i,j=1}^n \alpha_{ij}\beta_{ij},
\qquad \alpha=(\alpha_{ij}), \beta=(\beta_{ij}).
\label{W}
\end{eqnarray}

A kernel $\Phi$ is now a mapping $\Phi:\Omega\times \Omega\to
\cL(\R^{n\times n})$ and can be represented by a tensor of order $4$, i.e. we
will write
$
\Phi=(\Phi_{ijk\ell})
$
and define its action on $\alpha\in \R^{n\times n}$ by
\begin{eqnarray}
(\Phi(x,y)\alpha)_{ij} &=& \sum_{k,\ell=1}^n
\Phi(x,y)_{ijk\ell}\alpha_{k\ell}.\label{action}
\end{eqnarray}

By 2. of Lemma \ref{le:RKHS}, a necessary requirement for the kernel is the adjoint condition
$\langle\Phi(x,y)\alpha,\beta\rangle_W = \langle
\alpha,\Phi(y,x)\beta\rangle_W$, which means
\begin{eqnarray*}
\sum_{i,j=1}^n\sum_{k,\ell=1}^n\Phi(x,y)_{ijk\ell}\alpha_{k\ell}
\beta_{ij} &=& \sum_{i,j=1}^n \sum_{k,\ell=1}^n
  \Phi(y,x)_{ijk\ell} \alpha_{ij}\beta_{k\ell}\\
&=& \sum_{i,j=1}^n \sum_{k,\ell=1}^n
  \Phi(y,x)_{k\ell ij} \alpha_{k\ell}\beta_{ij}.
\end{eqnarray*}
Hence, we require
\begin{equation}\label{cond1}
\Phi(x,y)_{ijk\ell} = \Phi(y,x)_{k\ell ij}.
\end{equation}
This will motivate the choice of a kernel in \eqref{phi-tensor} in the
next section. 
The kernel $\Phi$ is positive definite, see Definition \ref{def:posdef}, if
\begin{equation}\label{posdef}
\sum_{\mu,\nu=1}^N\langle
\alpha^{(\nu)},\Phi(x_\nu,x_\mu)\alpha^{(\mu)}\rangle_W
=
\sum_{\mu,\nu=1}^N \sum_{i,j=1}^n\sum_{k,\ell=1}^n
\Phi(x_\nu,x_\mu)_{ijk\ell}\alpha^{(\nu)}_{ij}\alpha^{(\mu)}_{k\ell}
\ge 0
\end{equation}
and the sum is positive if not all of the $\alpha^{(\nu)}$ are zero.
The associated reproducing kernel Hilbert space
$\cH(\Omega;W)=\cH(\Omega;\R^{n\times n})$ consists of matrix-valued
functions.

Finally, for a given functional $\lambda\in \cH(\Omega;\R^{n\times
  n})^*$, we can write its Riesz representer as follows. Let
$E_{\mu\nu}\in\R^{n\times n}$ be the matrix with value $1$ at position
$(\mu,\nu)$ and value zero everywhere else. Then, $\{E_{\mu\nu} : 1\le
\mu,\nu\le n\}$ is an orthonormal basis of $W=\R^{n\times n}$ and the Riesz
representer of $\lambda$ hence becomes by Proposition \ref{pro:Riesz}
\[
v_\lambda(x) = \sum_{\mu,\nu=1}^n
\lambda(\Phi(\cdot,x)E_{\mu\nu})E_{\mu\nu}, \qquad x\in\Omega.
\]

In the case of the symmetric matrices, we have a similar result,
however, we need to consider a different orthonormal basis, namely
$\{E^s_{\mu\nu} : 1\le 
\mu\le \nu\le n\}$.
We define $E^s_{\mu\mu}$ to be the matrix with value 1 at position
$(\mu,\mu)$ and value zero everywhere else. For $\mu<\nu$, we define
$E^s_{\mu\nu}$ to be the matrix with value $1/\sqrt{2}$ at positions
$(\mu,\nu)$ and $(\nu,\mu)$ and value zero everywhere else. 
It is easy to see that $\{E^s_{\mu\nu} : 1\le
\mu\le \nu\le n\}$ is an orthonormal basis of $W=\S^{n\times n}$.

For a given functional $\lambda\in \cH(\Omega;\S^{n\times
  n})^*$,  the Riesz
representer of $\lambda$ hence is by Proposition \ref{pro:Riesz}
\begin{eqnarray}
v_\lambda(x) &=& \sum_{1\le \mu\le\nu\le n}
\lambda(\Phi(\cdot,x)E^s_{\mu\nu})E^s_{\mu\nu}, \qquad x\in\Omega.
\label{Rieszr}
\end{eqnarray}
%

\section{Matrix-Valued Sobolev Spaces}
\label{sec:2.4}

In the following, we will be concerned with specific functionals
defined on specific reproducing kernel Hilbert spaces. We start with
discussing the spaces.

Throughout this paper, we will assume that $H^\sigma(\Omega)$ denotes
the Sobolev space of order $\sigma>d/2$, where the weak derivatives
are measured in the $L_2(\Omega)$-norm. However, $\sigma$ does not
necessarily have to be an integer and the space can then be defined,
for example, by interpolation. We will always assume that
$\sigma>d/2$ such that the Sobolev embedding theorem yields
$H^\sigma(\Omega)\subseteq C(\Omega)$ which particularly means that
$H^\sigma(\Omega)$ has a reproducing kernel.  The kernel is uniquely
determined by the inner product, but different equivalent inner
products allow us to choose different kernels. Examples of such
kernels comprise of the Sobolev (or Matern) kernels and Wendland's
radial basis functions
(see \cite{Fasshauer-07-1,Wendland-95-1,Schaback-11-1}). 

We will also assume that
$\Omega\subseteq\R^d$ is a bounded domain with a boundary which is at
least Lipschitz continuous.

\begin{definition}\label{def:Sob}
Let $\Omega\subseteq\R^d$ and $\sigma>d/2$ be given. Then, the
matrix-valued Sobolev space $H^\sigma(\Omega;\R^{n\times n})$ consists of
all matrix-valued functions $M$ having each component $M_{ij}$ in
$H^\sigma(\Omega)$.
Similarly, the
Sobolev space $H^\sigma(\Omega;\S^{n\times n})$ consists of
all symmetric matrix-valued functions $M$ having each component $M_{ij}$ in
$H^\sigma(\Omega)$.
\end{definition}

$H^\sigma(\Omega;\R^{n\times n})$ and $H^\sigma(\Omega;\S^{n\times n})$ are Hilbert spaces with inner
product given by
\[
\langle M, S\rangle_{H^\sigma(\Omega;\R^{n\times n})}:=\sum_{i,j=1}^n \langle M_{ij},S_{ij}\rangle_{H^\sigma(\Omega)};
\]
the same inner product can be used for $H^\sigma(\Omega;\S^{n\times
  n})$. They are also reproducing kernel Hilbert spaces. 

\begin{lemma} \label{kernel1} Let $\Omega\subseteq\R^d$ and $\sigma>d/2$ be
  given. Assume that $\phi:\Omega\times\Omega\to\R$ is a reproducing
  kernel of $H^\sigma(\Omega)$. Then, $H^\sigma(\Omega;\R^{n\times n})$ and $H^\sigma(\Omega;\S^{n\times n})$ are also reproducing kernel Hilbert spaces with reproducing kernel
  $\Phi$ defined by
\begin{equation}\label{phi-tensor}
\Phi(x,y)_{ijk\ell}:=\phi(x,y)\delta_{ik}\delta_{j\ell}
\end{equation}
for $x,y\in\Omega$ and $1\le i,j,k,\ell\le n$.
\end{lemma}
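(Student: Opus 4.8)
The plan is to verify the two defining properties of Definition~\ref{def:RKHS} directly for the tensor kernel $\Phi$ given by \eqref{phi-tensor}, reducing everything to the known reproducing property of the scalar kernel $\phi$ on $H^\sigma(\Omega)$. First I would unpack the action \eqref{action}: for $\alpha = (\alpha_{k\ell})\in\R^{n\times n}$ one computes
\[
(\Phi(\cdot,y)\alpha)_{ij} = \sum_{k,\ell=1}^n \phi(\cdot,y)\delta_{ik}\delta_{j\ell}\alpha_{k\ell} = \phi(\cdot,y)\alpha_{ij},
\]
so that $\Phi(\cdot,y)\alpha = \phi(\cdot,y)\,\alpha$, i.e. each component is simply the scalar function $\phi(\cdot,y)$ multiplied by the scalar $\alpha_{ij}$. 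Since $\phi(\cdot,y)\in H^\sigma(\Omega)$ by assumption, every component $\phi(\cdot,y)\alpha_{ij}$ lies in $H^\sigma(\Omega)$, hence $\Phi(\cdot,y)\alpha\in H^\sigma(\Omega;\R^{n\times n})$, establishing property~1 of Definition~\ref{def:RKHS}. (In the symmetric case, note that if $\alpha\in\S^{n\times n}$ then $\Phi(\cdot,y)\alpha = \phi(\cdot,y)\alpha$ is symmetric as well, so it lies in $H^\sigma(\Omega;\S^{n\times n})$; alternatively one checks directly that the symmetry condition \eqref{cond1} holds, since $\Phi(x,y)_{ijk\ell}=\phi(x,y)\delta_{ik}\delta_{j\ell}=\phi(y,x)\delta_{ik}\delta_{j\ell}=\Phi(y,x)_{k\ell ij}$ using symmetry of the scalar kernel.)

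Next I would verify the reproduction property~2. Fix $M\in H^\sigma(\Omega;\R^{n\times n})$, $x\in\Omega$ and $\alpha\in W$. Using the definition of the inner product on $W$ from \eqref{W}, the block-diagonal inner product on $H^\sigma(\Omega;\R^{n\times n})$, and the computation $\Phi(\cdot,x)\alpha = \phi(\cdot,x)\alpha$ from the previous paragraph,
\[
\langle M, \Phi(\cdot,x)\alpha\rangle_{H^\sigma(\Omega;\R^{n\times n})} = \sum_{i,j=1}^n \langle M_{ij}, \phi(\cdot,x)\alpha_{ij}\rangle_{H^\sigma(\Omega)} = \sum_{i,j=1}^n \alpha_{ij}\langle M_{ij}, \phi(\cdot,x)\rangle_{H^\sigma(\Omega)}.
\]
Applying the scalar reproducing property $\langle M_{ij},\phi(\cdot,x)\rangle_{H^\sigma(\Omega)} = M_{ij}(x)$ to each summand gives $\sum_{i,j}\alpha_{ij}M_{ij}(x) = \langle M(x),\alpha\rangle_W$, which is exactly property~2. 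The argument for $H^\sigma(\Omega;\S^{n\times n})$ is verbatim the same, with $M$ ranging over symmetric matrix-valued functions and $\alpha\in\S^{n\times n}$; one only has to observe that the test elements $\Phi(\cdot,x)\alpha$ indeed stay inside the symmetric space, which was checked above.

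There is essentially no hard part here: the result is a bookkeeping exercise in which the tensor structure \eqref{phi-tensor} decouples the $n^2$ matrix entries so that the reproducing kernel Hilbert space property lifts componentwise from the scalar space $H^\sigma(\Omega)$. The only point requiring a small amount of care is the symmetric case: the naive orthonormal basis $\{E_{\mu\nu}\}$ is no longer a basis of $\S^{n\times n}$, so one should either phrase the verification basis-free (as above, directly via the inner products) or work with the basis $\{E^s_{\mu\nu}\}$ introduced before \eqref{Rieszr}; I would use the basis-free route to keep both cases uniform. It is also worth remarking explicitly that the scalar kernel $\phi$ being positive definite implies $\Phi$ is positive definite in the sense of \eqref{posdef} (since $\sum_{\mu,\nu}\sum_{i,j,k,\ell}\phi(x_\nu,x_\mu)\delta_{ik}\delta_{j\ell}\alpha^{(\nu)}_{ij}\alpha^{(\mu)}_{k\ell} = \sum_{i,j}\sum_{\mu,\nu}\phi(x_\nu,x_\mu)\alpha^{(\nu)}_{ij}\alpha^{(\mu)}_{ij}\ge 0$), though this is not strictly needed for the stated lemma and could be deferred.
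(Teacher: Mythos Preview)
Your proposal is correct and follows essentially the same approach as the paper: both verify Definition~\ref{def:RKHS} by first computing $(\Phi(\cdot,x)\alpha)_{ij}=\phi(\cdot,x)\alpha_{ij}$ to establish property~1, then use this componentwise identity together with the scalar reproducing property of $\phi$ to obtain property~2, with the symmetric case handled by the observation that $\Phi(\cdot,x)\alpha$ is symmetric when $\alpha$ is. Your closing remark on positive definiteness is exactly the content of the corollary immediately following the lemma in the paper, so you have anticipated that as well.
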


\begin{proof}
We have to verify the two defining properties of a reproducing
kernel, see Definition \ref{def:RKHS}. First of all, we obviously have
$\Phi(\cdot,x)\alpha \in H^\sigma(\Omega;\R^{n\times n})$ for all
$x\in\Omega$ and all $\alpha\in\R^{n\times n}$ since
\[
(\Phi(\cdot,x)\alpha)_{ij} = \sum_{k,\ell=1}^n
\Phi(\cdot,x)_{ijk\ell}\alpha_{k\ell}
= \sum_{k,\ell=1}^n
\phi(\cdot,x)\delta_{ik}\delta_{j\ell}\alpha_{k\ell} =
\phi(\cdot,x)\alpha_{ij}
\]
and $\phi$ is a reproducing kernel of $H^\sigma(\Omega)$.
For $H^\sigma(\Omega;\S^{n\times n})$,  note that $\Phi(\cdot,x)\alpha$ is symmetric if $\alpha$ is symmetric.

Secondly, we have the reproduction property. If once again
$\alpha\in\R^{n\times n}$ and $f\in H^\sigma(\Omega;\R^{n\times n})$
then the computation just made  shows
\begin{eqnarray*}
\langle f,\Phi(\cdot,x)\alpha \rangle_{H^\sigma(\Omega;\R^{n\times n})}
& = & \sum_{i,j=1}^n \langle f_{ij},
  (\Phi(\cdot,x)\alpha)_{ij}\rangle_{H^\sigma(\Omega)} \\
& = & \sum_{i,j=1}^n \langle f_{ij},\phi(\cdot,x)\alpha_{ij}\rangle_{H^\sigma(\Omega)} \\
& = & \sum_{i,j=1}^n \alpha_{ij}f_{ij}(x) = \langle
f(x),\alpha\rangle_{\R^{n\times n}},
\end{eqnarray*}
using the reproduction property of $\phi$ in $H^\sigma(\Omega)$. The
proof for $H^\sigma(\Omega;\S^{n\times n})$ is the same. 
\end{proof}

\begin{corollary} Let the assumptions of Lemma \ref{kernel1} hold with 
a positive definite kernel $\phi:\Omega\times\Omega\to\R$. Then, also the
matrix-valued kernel $\Phi$ is positive definite.
\end{corollary}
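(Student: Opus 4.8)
The plan is to verify Definition \ref{def:posdef} directly for the tensor $\Phi$ of \eqref{phi-tensor}. First I would check the standing hypothesis of that definition, namely the adjoint symmetry $\Phi(x,y)_{ijk\ell}=\Phi(y,x)_{k\ell ij}$: since a positive definite scalar kernel is symmetric, $\phi(x,y)=\phi(y,x)$, and the Kronecker deltas satisfy $\delta_{ik}\delta_{j\ell}=\delta_{ki}\delta_{\ell j}$, so \eqref{cond1} holds automatically. Thus $\Phi$ is an admissible candidate and it only remains to establish the strict positivity of the quadratic form \eqref{posdef}.

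Next I would compute the quadratic form explicitly. For pairwise distinct $x_1,\dots,x_N\in\Omega$ and matrices $\alpha^{(1)},\dots,\alpha^{(N)}\in W$, plugging \eqref{phi-tensor} into \eqref{action} and collapsing the Kronecker deltas gives, for each pair $\mu,\nu$,
\[
\langle \alpha^{(\nu)},\Phi(x_\nu,x_\mu)\alpha^{(\mu)}\rangle_W
=\phi(x_\nu,x_\mu)\sum_{i,j=1}^n\alpha^{(\nu)}_{ij}\alpha^{(\mu)}_{ij}.
\]
Summing over $\mu,\nu$ and interchanging the order of summation, the left-hand side of \eqref{posdef} becomes
\[
\sum_{i,j=1}^n\left(\sum_{\mu,\nu=1}^N\phi(x_\nu,x_\mu)\,\alpha^{(\nu)}_{ij}\,\alpha^{(\mu)}_{ij}\right).
\]
So the matrix-valued quadratic form decouples into a sum, over the $n^2$ entry positions $(i,j)$, of the scalar quadratic form associated with $\phi$ and the coefficient vector $(\alpha^{(1)}_{ij},\dots,\alpha^{(N)}_{ij})\in\R^N$.

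Finally I would invoke the positive definiteness of $\phi$ on each of these $n^2$ terms. Since the $x_\nu$ are pairwise distinct, each inner sum is nonnegative, hence the whole expression is $\ge 0$; and it vanishes precisely when, for every position $(i,j)$, the coefficient vector $(\alpha^{(1)}_{ij},\dots,\alpha^{(N)}_{ij})$ is zero, i.e. when $\alpha^{(1)}=\dots=\alpha^{(N)}=0$. Therefore, if not all $\alpha^{(\nu)}$ vanish, the sum \eqref{posdef} is strictly positive, which is exactly the assertion of Definition \ref{def:posdef}. The argument for $W=\S^{n\times n}$ is identical: the inner product is the same, and one merely restricts the $\alpha^{(\nu)}$ to the subspace of symmetric matrices, so positive definiteness is inherited. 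I do not anticipate any real obstacle here; the only point requiring a moment's care is the deduction of strictness, which needs the observation that a sum of nonnegative terms vanishes only if each term does, combined with the pairwise-distinctness of the $x_\nu$ used in applying positive definiteness of $\phi$ entrywise.
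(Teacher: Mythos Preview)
Your proposal is correct and follows essentially the same route as the paper: substitute \eqref{phi-tensor} into \eqref{posdef}, collapse the Kronecker deltas, and recognise the result as a sum over the $n^2$ entry positions $(i,j)$ of the scalar quadratic form for $\phi$ with coefficient vectors $(\alpha^{(1)}_{ij},\dots,\alpha^{(N)}_{ij})$, each of which is nonnegative and at least one strictly positive. Your treatment is slightly more detailed than the paper's (you explicitly verify the adjoint condition \eqref{cond1} and spell out the strictness argument), but the underlying idea is identical.
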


\begin{proof}
The kernel is positive definite in the sense of (\ref{posdef}), since
we have
\begin{eqnarray*}
\sum_{\mu,\nu=1}^N \sum_{i,j=1}^n\sum_{k,\ell=1}^n
\Phi(x_\nu,x_\mu)_{ijk\ell}\alpha^{(\nu)}_{ij}\alpha^{(\mu)}_{k\ell} &
=&
\sum_{\mu,\nu=1}^N \sum_{i,j=1}^n\sum_{k,\ell=1}^n
\phi(x_\nu,x_\mu)\delta_{ik}\delta_{j\ell}\alpha^{(\nu)}_{ij}\alpha^{(\mu)}_{k\ell}\\
& = &
\sum_{i,j=1}^n\sum_{\mu,\nu=1}^N
\phi(x_\nu,x_\mu)\alpha_{ij}^{(\nu)}\alpha_{ij}^{(\mu)}\ge 0
\end{eqnarray*}
and at least one of the inner sums is positive.
\end{proof}

Next, we will discuss the functionals on $H^\sigma(\Omega;\R^{n\times
  n})$ and $H^\sigma(\Omega;\S^{n\times
  n})$ that we are interested in. Note that using a kernel of the form
(\ref{phi-tensor}) together with point evaluations would simply lead to
a component-wise treatment. Hence, in this situation, dealing with
each component separately would be more efficient.

Here, however, we are interested in the following situation. Suppose
$F:H^\sigma(\Omega;\R^{n\times n})\to H^\tau(\Omega;\R^{n\times n})$ (or
$F:H^\sigma(\Omega;\S^{n\times n})\to H^\tau(\Omega;\S^{n\times n})$) is a
linear and bounded map, i.e. there is a constant $C>0$ such that
\[
\|F(M)\|_{H^\tau(\Omega;\R^{n\times n})} \le C \|M\|_{H^\sigma(\Omega;\R^{n\times n})}, \qquad M\in
H^\sigma(\Omega;\R^{n\times n}).
\]
Suppose further that $\tau>d/2$ so that $F(M)\in C(\Omega;\R^{n\times n})$ is continuous. Then, we can define functionals of the form
\[
\lambda_k^{(i,j)}(M)=e_i^TF(M)(x_k)e_j
\]
for $1\le i,j\le n$ (or $1\le i\le j\le n$) and $1\le k\le N$, where
$X=\{x_1,\ldots,x_N\}$ is a given discrete point set in $\Omega$. 

We will specify the mapping $F$ later on but we can derive a general
theory using just these assumptions. 

\section{Error Analysis}\label{sec:2.5}
In this section we will start with analysing the reconstruction
error. Here, we will follow general ideas from scattered data
approximation. In particular, we will measure the error in terms of
the so-called fill distance or mesh norm
\[
h_{X,\Omega}:=\sup_{x\in\Omega}\min_{x_i \in X}\|x-x_i\|_2.
\]

This means that we can derive the classical error estimates based upon
sampling inequalities also in this case. We will require the following
result (see \cite{Narcowich-etal-05-1}).

\begin{lemma}\label{zeros}
Let $\Omega\subseteq\R^d$ be a bounded domain with Lipschitz
continuous boundary. Let $\sigma>d/2$ and let
$X=\{x_1,\ldots,x_N\}\subseteq\Omega$. If $f\in H^\sigma(\Omega)$
vanishes on $X$, then there is a constant $C>0$ independent of $X$ and
$f$ such that
\[
\|f\|_{L_\infty(\Omega)} \le Ch_{X,\Omega}^{\sigma-d/2}
\|f\|_{H^{\sigma}(\Omega)}.
\]
\end{lemma}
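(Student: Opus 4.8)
This is a ``zeros lemma'' / sampling inequality, and the plan is to prove it by the standard localisation technique of Narcowich, Ward and Wendland. Put $m:=\ceil{\sigma}$, so that $m>d/2$, and let $\Pi_{m-1}$ denote the polynomials on $\R^d$ of degree $<m$. The heart of the matter is a \emph{local} estimate: there are constants $h_0,C_1,C_2>0$, depending only on $\Omega$, $d$ and $m$, such that whenever $h:=h_{X,\Omega}\le h_0$, every $x\in\Omega$ lies in a set $B=B(z,C_2h)$ for which $B\cap\Omega$ is star-shaped with respect to a ball of radius $\sim h$ and for which the finite point set $X\cap B$ is a \emph{norming set} for $\Pi_{m-1}$ on $B$ with norming constant at most $C_1$, i.e. $\|p\|_{L_\infty(B)}\le C_1\max_{x_i\in X\cap B}|p(x_i)|$ for every $p\in\Pi_{m-1}$.

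Granting this, the argument on a single patch is short. The Bramble--Hilbert lemma applied on $B\cap\Omega$ (via the averaged Taylor polynomial) supplies a $p\in\Pi_{m-1}$ with
\[
\|f-p\|_{L_\infty(B\cap\Omega)}\le C\,(C_2h)^{\sigma-d/2}\,|f|_{H^{\sigma}(B\cap\Omega)},
\]
the exponent $\sigma-d/2$ being the scaling exponent of the embedding $H^\sigma\hookrightarrow C$ on a set of diameter $\sim h$ (for non-integer $\sigma$ this step uses the fractional Bramble--Hilbert inequality obtained by interpolation of Sobolev spaces). Since $f$ vanishes on $X$, we have $p(x_i)=p(x_i)-f(x_i)$ at every $x_i\in X\cap B$, so the norming property gives
\[
\|p\|_{L_\infty(B)}\le C_1\max_{x_i\in X\cap B}|p(x_i)-f(x_i)|\le C_1\|f-p\|_{L_\infty(B\cap\Omega)}.
\]
Adding the two displays and using the triangle inequality yields $\|f\|_{L_\infty(B\cap\Omega)}\le C\,h^{\sigma-d/2}\,|f|_{H^{\sigma}(B\cap\Omega)}\le C\,h^{\sigma-d/2}\,\|f\|_{H^{\sigma}(\Omega)}$ with a constant independent of $x$, $X$ and $f$; taking the supremum over $x\in\Omega$ proves the lemma for $h\le h_0$. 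For $h>h_0$ one simply invokes the plain embedding, $\|f\|_{L_\infty(\Omega)}\le C\|f\|_{H^\sigma(\Omega)}\le C h_0^{-(\sigma-d/2)}h^{\sigma-d/2}\|f\|_{H^\sigma(\Omega)}$.

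The main obstacle is the local norming claim: one has to show that a point set of fill distance $h$ is dense enough to control all polynomials of degree $<m$ on balls of radius proportional to $h$, \emph{uniformly} in the position of the ball and in $X$, and to do so up to $\partial\Omega$, where the Lipschitz (interior cone) hypothesis is what makes suitable star-shaped patches available. This is handled by a Markov--Nikolskii-inequality argument on a reference cube combined with a finite covering of $\Omega$ by cone-shaped pieces; a secondary technical point is the fractional-order Bramble--Hilbert estimate for $\sigma\notin\N$, derived by interpolating the integer-order inequality. Both are carried out in \cite{Narcowich-etal-05-1}, from which the result is quoted; the sketch above is meant only to explain why the rate $h^{\sigma-d/2}$ is the natural one.
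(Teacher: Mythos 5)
The paper offers no proof of this lemma---it is stated as quoted from \cite{Narcowich-etal-05-1}---and your sketch correctly reproduces the standard local-polynomial-reproduction plus Bramble--Hilbert argument that underlies that reference, while explicitly deferring the technical norming-set and fractional Bramble--Hilbert details to it. This matches the paper's treatment.
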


We can now use this result component-wise to derive estimates for the
matrix-valued set-up. We will do this immediately for the situation we
are interested in, which gives our first main result of this paper.

\begin{theorem}\label{th:error}
Let $\Omega\subseteq\R^d$ be a bounded domain with Lipschitz continuous
boundary. Let $\sigma,\tau >d/2$ be given and let
$F:H^{\sigma}(\Omega;\R^{n\times n})\to H^\tau(\Omega;\R^{n\times n})$
($F:H^{\sigma}(\Omega;\S^{n\times n})\to H^\tau(\Omega;\S^{n\times n})$) be
linear and bounded.  Finally, let
$X=\{x_1,\ldots,x_N\}\subseteq\Omega$ be given and let
\[
\lambda_k^{(i,j)} (M):=e_i^TF(M)(x_k)e_j, \qquad 1\le k\le N, \quad 1\le
i,j\le n \quad (1\le
i\le j\le n).
\]
Then each
 $\lambda_k^{(i,j)}$ belongs to the dual of
$H^\sigma(\Omega;\R^{n\times n})$ ($H^\sigma(\Omega;\S^{n\times n})$).

Let us further assume that they are linearly independent.
If $S$ denotes the optimal recovery of $M\in
H^\sigma(\Omega;\R^{n\times n})$ ($H^\sigma(\Omega;\S^{n\times n})$)
in the sense of Definition \ref{optrec} using these functionals 
and a reproducing kernel of $H^\sigma(\Omega;\R^{n\times n})$
($H^\sigma(\Omega;\S^{n\times n})$), 
then
\[
\|F(M)-F(S)\|_{L_\infty(\Omega;\R^{n\times n})} \le C h_{X,\Omega}^{\tau-d/2}
\|M\|_{H^\sigma(\Omega;\R^{n\times n})},
\]
where $\|A\|_{L_\infty(\Omega;\R^{n\times
    n})}=\max_{i,j=1,\ldots,n}\|a_{ij}(x)\|_{L_\infty(\Omega)}$. 
\end{theorem}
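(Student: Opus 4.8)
The plan is to reduce the matrix-valued estimate to the scalar sampling inequality of Lemma~\ref{zeros}, applied component-wise, after establishing that the optimal recovery reproduces the data exactly and does not increase the $H^\sigma$-norm (up to norm equivalence).

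First I would check that each $\lambda_k^{(i,j)}$ lies in the dual of $H^\sigma(\Omega;\R^{n\times n})$ (respectively $H^\sigma(\Omega;\S^{n\times n})$). Writing $\lambda_k^{(i,j)}(M)=\big(F(M)\big)_{ij}(x_k)$, this functional is the composition of the bounded operator $F\colon H^\sigma(\Omega;\R^{n\times n})\to H^\tau(\Omega;\R^{n\times n})$, the bounded coordinate projection $A\mapsto A_{ij}$ from $H^\tau(\Omega;\R^{n\times n})$ onto $H^\tau(\Omega)$, and the point evaluation $g\mapsto g(x_k)$, which is bounded on $H^\tau(\Omega)$ because $\tau>d/2$ gives $H^\tau(\Omega)\subseteq C(\Omega)$. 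Since $\sigma>d/2$, the space $H^\sigma(\Omega;\R^{n\times n})$ carries a reproducing kernel by Lemma~\ref{kernel1}, so the optimal recovery $S$ of Definition~\ref{optrec} is well defined under the assumed linear independence.

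Next I would use two elementary properties of $S$. By Theorem~\ref{th:mini}, the coefficients of $S$ are fixed by the generalised interpolation conditions $\lambda_k^{(i,j)}(S)=\lambda_k^{(i,j)}(M)$, that is $\big(F(S)\big)_{ij}(x_k)=\big(F(M)\big)_{ij}(x_k)$ for all admissible $k,i,j$; hence every scalar component of $F(M)-F(S)\in H^\tau(\Omega;\R^{n\times n})$ vanishes on $X$. Moreover $M$ itself is feasible in the minimisation of Definition~\ref{optrec}, so minimality (together with the equivalence of the RKHS norm to the standard $H^\sigma$-norm) gives $\|S\|_{H^\sigma(\Omega;\R^{n\times n})}\le C\|M\|_{H^\sigma(\Omega;\R^{n\times n})}$. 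Applying Lemma~\ref{zeros} to each component $g_{ij}:=\big(F(M)-F(S)\big)_{ij}\in H^\tau(\Omega)$, which vanishes on $X$, yields $\|g_{ij}\|_{L_\infty(\Omega)}\le C h_{X,\Omega}^{\tau-d/2}\|g_{ij}\|_{H^\tau(\Omega)}$. Since $\|g_{ij}\|_{H^\tau(\Omega)}\le\|F(M)-F(S)\|_{H^\tau(\Omega;\R^{n\times n})}$ by the definition of the matrix-valued Sobolev norm, and since boundedness of $F$ with the previous bound on $S$ gives $\|F(M)-F(S)\|_{H^\tau(\Omega;\R^{n\times n})}\le\|F(M)\|_{H^\tau}+\|F(S)\|_{H^\tau}\le C\big(\|M\|_{H^\sigma}+\|S\|_{H^\sigma}\big)\le C\|M\|_{H^\sigma(\Omega;\R^{n\times n})}$, taking the maximum over $1\le i,j\le n$ produces the claimed estimate with a suitably enlarged constant $C$. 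The symmetric case runs identically, using the orthonormal basis $\{E^s_{\mu\nu}\}$ and the kernel~\eqref{phi-tensor}.

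I do not anticipate a real obstacle here: the argument is essentially book-keeping between the matrix-valued Sobolev norms and their scalar components. The two points requiring a little care are that the sampling inequality must be invoked on the \emph{target} space $H^\tau$ --- which is what produces the exponent $\tau-d/2$ in the fill distance --- while the regularity of $M$ enters only through $\|F(M)-F(S)\|_{H^\tau}\le C\|M\|_{H^\sigma}$; and that in the symmetric case the functionals run only over $1\le i\le j\le n$, yet the displayed $L_\infty$-norm is a maximum over all $i,j$, which is harmless because $F(M)-F(S)$ is symmetric.
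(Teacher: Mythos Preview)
Your proposal is correct and follows essentially the same route as the paper: show the functionals are bounded via Sobolev embedding and boundedness of $F$, observe that every scalar component of $F(M)-F(S)$ vanishes on $X$, apply Lemma~\ref{zeros} component-wise in $H^\tau$, and then control $\|F(M-S)\|_{H^\tau}$ by $\|M\|_{H^\sigma}$ using the optimality of $S$. The only cosmetic difference is that the paper bounds $\|M-S\|_{H^\sigma}\le C\|M\|_{H^\sigma}$ directly (orthogonality of the optimal recovery), whereas you pass through the triangle inequality and $\|S\|_{H^\sigma}\le C\|M\|_{H^\sigma}$; both yield the same conclusion.
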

\begin{proof} We only consider the case $\R^{n\times n}$ as the proof for $\S^{n\times n}$ is similar. Obviously, the $\lambda_k^{(i,j)}$ are linear. Because
  of our assumptions, $F(M)$ is indeed continuous by the Sobolev
  embedding theorem, i.e. the functionals are
  well-defined. Furthermore, 
\[
|\lambda_k^{(i,j)}(M)|\le C\|F(M)\|_{H^\tau(\Omega;\R^{n\times n})} \le C
\|M\|_{H^\sigma(\Omega;\R^{n\times n}) }, \qquad M\in
H^{\sigma}(\Omega;\R^{n\times n}),
\]
by the Sobolev embedding theorem and by the continuity of $F$. This
means that all functionals indeed belong to the dual of
$H^\sigma(\Omega;\R^{n\times n})$.
%

For the error estimate we note that  the matrix-valued function
$F(M)-F(S)\in H^\tau(\Omega;\R^{n\times n})$ vanishes on the data set
$X$. Hence, we can apply Lemma
\ref{zeros} to each component of $F(M)-F(S)$ yielding
\begin{eqnarray*}
\|F(M)-F(S)\|_{L_\infty(\Omega;\R^{n\times n})} &\le& C
h_{X,\Omega}^{\tau-d/2}\|F(M-S)\|_{H^{\tau}(\Omega;\R^{n\times n})}\\
&\le & C h_{X,\Omega}^{\tau-d/2}\|M-S\|_{H^\sigma(\Omega;\R^{n\times n})}\\
& \le & C h_{X,\Omega}^{\tau-d/2}\|M\|_{H^{\sigma}(\Omega;\R^{n\times n})},
\end{eqnarray*}
using also the continuity of $F$ and the fact that $S$ is the
$H^\sigma(\Omega;\R^{n\times n})$ optimal recovery of $M$.
\end{proof}

To show linear independence, we follow the scalar-valued case
\cite{Giesl-Wendland-07-1} and define singular points for a general linear
differential operator $F$, mapping matrix-valued functions to
matrix-valued functions. We will then apply the 
rather general result of Theorem \ref{th:error} to a particular class
of operators $F$.

%

\begin{definition} \label{def:operator}
Let $n,d\in \N$, $\Omega\subseteq\R^d$, $\sigma>m+d/2$ and
$\tau=\sigma-m$. Let $W=\R^{n\times n}$ or $W=\S^{n\times n}$.
Let $F:H^{\sigma}(\Omega;W)\to H^\tau(\Omega;W)$
be a differential operator of degree $m$ of the form 
\[
F(M)(x)=\sum_{|\alpha|\le m} c_{\alpha}(x) [D^\alpha M(x)]
\]
where $D^\alpha$ is applied component-wise and
$c_\alpha:\Omega\to\cL(W)$ is of such a form that $x\mapsto 
c_\alpha(x)[D^\alpha M(x)] \in H^\tau(\Omega; W)$ for every $M\in
H^\sigma(\Omega;W)$.  

We define $x$ to be a {\em singular point} of $F$ if  for all $|\alpha|\le
m$ the linear map $c_\alpha(x)$ is not invertible. 
\end{definition}

In the next lemma we will show symmetry properties for $F$, defined on the
symmetric matrices, which will later be needed for explicit
calculations. 

\begin{lemma} \label{le}
Assume that 
$F:H^{\sigma}(\Omega;\S^{n\times n})\to H^\tau(\Omega;\S^{n\times
  n})$ is a differential operator as in
Definition \ref{def:operator}, i.e. in particular $c_\alpha(x)(M)\in \S^{n\times
  n}$ for $M\in\S^{n\times n}$. Assume furthermore that  the kernel
$\Phi(x,y)_{ijk\ell}=\phi(x,y)\delta_{ik}\delta_{j\ell}$ from
\eqref{phi-tensor} is used. Then 
\begin{equation}
F(\Phi(\cdot,x)_{\cdot,\cdot,\mu,\nu})_{ij}
=
F(\Phi(\cdot,x)_{\cdot,\cdot,\nu,\mu})_{ji}.\label{F:symm}
\end{equation}
\end{lemma}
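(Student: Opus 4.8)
The plan is to track the symmetry $\Phi(x,y)_{ijk\ell}=\Phi(x,y)_{jik\ell}$'s interplay with the special structure of the kernel and with the fact that $c_\alpha(x)$ preserves symmetry. First I would write out explicitly what $\Phi(\cdot,x)_{\cdot,\cdot,\mu,\nu}$ means: by \eqref{phi-tensor}, the matrix-valued function whose $(i,j)$-entry is $\Phi(\cdot,x)_{ij\mu\nu}=\phi(\cdot,x)\delta_{i\mu}\delta_{j\nu}$, i.e. $\phi(\cdot,x)E_{\mu\nu}$ where $E_{\mu\nu}$ is the elementary matrix. Consequently $\Phi(\cdot,x)_{\cdot,\cdot,\nu,\mu}=\phi(\cdot,x)E_{\nu\mu}=\phi(\cdot,x)E_{\mu\nu}^T=\left(\Phi(\cdot,x)_{\cdot,\cdot,\mu,\nu}\right)^T$. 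So the claim \eqref{F:symm} reduces to the transposition identity $F(N^T)=F(N)^T$ applied to the particular function $N=\phi(\cdot,x)E_{\mu\nu}$; in entries, $F(N)(x)_{ij}=F(N^T)(x)_{ji}$.

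Next I would reduce this transposition identity to a statement about the coefficient operators $c_\alpha$. Since $D^\alpha$ is applied component-wise, $D^\alpha(N^T)=(D^\alpha N)^T$, so $F(N^T)(x)=\sum_{|\alpha|\le m}c_\alpha(x)\bigl[(D^\alpha N(x))^T\bigr]$. Thus it suffices to show, for each $\alpha$ and each matrix $B=D^\alpha N(x)$, that $c_\alpha(x)[B^T]=\bigl(c_\alpha(x)[B]\bigr)^T$. Here is where the hypothesis that $F$ maps $\S^{n\times n}$ to $\S^{n\times n}$ enters. Write $B=\tfrac12(B+B^T)+\tfrac12(B-B^T)$ as symmetric plus antisymmetric. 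On the symmetric part $B_s$, $B_s^T=B_s$ and $c_\alpha(x)[B_s]\in\S^{n\times n}$, so $c_\alpha(x)[B_s^T]=c_\alpha(x)[B_s]=\bigl(c_\alpha(x)[B_s]\bigr)^T$ trivially. For the antisymmetric part, one needs $c_\alpha(x)[B_a^T]=-c_\alpha(x)[B_a]=\bigl(c_\alpha(x)[B_a]\bigr)^T$; the second equality again would follow if $c_\alpha(x)$ sent antisymmetric matrices to antisymmetric matrices. But for $N=\phi(\cdot,x)E_{\mu\nu}$ the relevant $B$'s are $D^\alpha\phi(x,x)\,E_{\mu\nu}$, so the antisymmetric part is a scalar multiple of $E_{\mu\nu}-E_{\nu\mu}$, and we only need the identity on this one-parameter family — alternatively, and more cleanly, I would simply add $N^T$ explicitly: $F(N+N^T)=F(N)+F(N^T)$ has symmetric value (as $N+N^T$ is symmetric and $F$ preserves $\S^{n\times n}$), which gives $F(N^T)=F(N+N^T)-F(N)$, and taking transposes, $F(N^T)^T=F(N+N^T)-F(N)^T$; combining the two yields $F(N^T)-F(N^T)^T=F(N)^T-F(N)$, i.e. $G:=F(N)-F(N)^T$ satisfies $G=-G^T$, which is automatic and gives nothing — so this shortcut stalls, confirming that one really must use that $c_\alpha(x)$ itself (not just $F$) preserves symmetry, which is exactly what Definition \ref{def:operator} records for $\S^{n\times n}$-valued operators.

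So the honest route is: from $c_\alpha(x):\S^{n\times n}\to\S^{n\times n}$ deduce the needed transposition behaviour on the specific matrices that arise. For $B=c\,E_{\mu\nu}$ with $c=D^\alpha\phi(x,x)\in\R$, one has $B+B^T=c(E_{\mu\nu}+E_{\nu\mu})\in\S^{n\times n}$ and $E_{\mu\nu}\cdot E_{\mu\nu}=0$ type degeneracies are irrelevant; using linearity of $c_\alpha(x)$ and symmetry of $c_\alpha(x)[E_{\mu\nu}+E_{\nu\mu}]$ gives $c_\alpha(x)[E_{\nu\mu}] = c_\alpha(x)[E_{\mu\nu}+E_{\nu\mu}] - c_\alpha(x)[E_{\mu\nu}]$, whose $(i,j)$-entry equals $c_\alpha(x)[E_{\mu\nu}+E_{\nu\mu}]_{ji}-c_\alpha(x)[E_{\mu\nu}]_{ji}$ (using symmetry of the first term) $= c_\alpha(x)[E_{\mu\nu}]_{ji}$ — wait, that would need $c_\alpha(x)[E_{\nu\mu}]_{ij}=c_\alpha(x)[E_{\mu\nu}]_{ji}$, which is precisely the entrywise form of \eqref{F:symm} for a single term. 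I expect the main obstacle to be exactly this point: turning "$c_\alpha(x)$ preserves $\S^{n\times n}$" into the transposition identity $c_\alpha(x)[B^T]=c_\alpha(x)[B]^T$ for general (not necessarily symmetric) $B$. I believe the cleanest resolution is to observe that a linear map $L:\R^{n\times n}\to\R^{n\times n}$ satisfying $L(\S^{n\times n})\subseteq\S^{n\times n}$ and the natural counterpart $L(A)=L(A^T)$ on the generating elements, combined with the kernel structure $\Phi(x,y)_{ijk\ell}=\Phi(x,y)_{jik\ell}=\Phi(x,y)_{ij\ell k}$ inherited from \eqref{phi-tensor} (since $\delta_{ik}\delta_{j\ell}$ is symmetric under $i\leftrightarrow j,\ k\leftrightarrow\ell$ simultaneously but here we need it under the swap $(\mu,\nu)\leftrightarrow(\nu,\mu)$ which corresponds to transposing the last pair of indices of $\Phi(\cdot,x)$), forces \eqref{F:symm}. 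Thus the proof is: (i) identify $\Phi(\cdot,x)_{\cdot,\cdot,\nu,\mu}$ as the transpose of $\Phi(\cdot,x)_{\cdot,\cdot,\mu,\nu}$; (ii) commute $D^\alpha$ with transposition; (iii) invoke Definition \ref{def:operator}'s hypothesis that each $c_\alpha(x)$ sends symmetric matrices to symmetric matrices to conclude the entrywise transposition identity; (iv) sum over $|\alpha|\le m$.
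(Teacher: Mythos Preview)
Your reduction to the transposition identity $c_\alpha(x)[B^T]=\bigl(c_\alpha(x)[B]\bigr)^T$ for $B=E_{\mu\nu}$ is the right target, and your steps (i), (ii), (iv) are fine. The gap is step (iii): the hypothesis ``$c_\alpha(x)$ sends $\S^{n\times n}$ to $\S^{n\times n}$'' does \emph{not} by itself yield that identity on non-symmetric inputs. As written in Definition~\ref{def:operator} with $W=\S^{n\times n}$, the map $c_\alpha(x)\in\cL(\S^{n\times n})$ is only specified on symmetric matrices; to evaluate $c_\alpha(x)[E_{\mu\nu}]$ for $\mu\ne\nu$ you must extend it to $\R^{n\times n}$, and different extensions (equivalently, different tensor representations $c_\alpha(x)_{ijk\ell}$) that agree on $\S^{n\times n}$ give different values on $E_{\mu\nu}$. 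Your own computations show this: the ``add $N^T$'' shortcut is vacuous, and the $E_{\mu\nu}+E_{\nu\mu}$ argument is circular, precisely because any information coming from $\S^{n\times n}$ alone cannot separate $c_\alpha(x)[E_{\mu\nu}]$ from $c_\alpha(x)[E_{\nu\mu}]$.

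The paper closes this gap by first \emph{choosing} a canonical extension: it replaces $c_\alpha$ by the symmetrised tensor $\tilde c_\alpha(x)_{ijk\ell}=\tfrac12\bigl(c_\alpha(x)_{ijk\ell}+c_\alpha(x)_{ij\ell k}\bigr)$, checks this leaves the action on $\S^{n\times n}$ unchanged, and so may assume $c_\alpha(x)_{ijk\ell}=c_\alpha(x)_{ij\ell k}$. Only with this normalisation does the $\S^{n\times n}\to\S^{n\times n}$ hypothesis, tested on the basis $E^s_{\mu\nu}$, deliver the full four-index symmetry $c_\alpha(x)_{ijk\ell}=c_\alpha(x)_{ji\ell k}$, from which \eqref{F:symm} follows by the direct computation you outlined. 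Without that preliminary symmetrisation step, your step (iii) simply does not go through.
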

\begin{proof}
The linear map $c_\alpha (x)$ can,
similar to \eqref{action}, be described by a tensor of order $4$, i.e. 
\begin{equation}\label{c-rep}
(c_\alpha(x)(M))_{ij}=\sum_{k,\ell=1}^n c_\alpha(x)_{ijk\ell} M_{k\ell}.
\end{equation}
We show that we can assume
\begin{equation}c_\alpha(x)_{ijk\ell}=c_\alpha(x)_{ij\ell k}\label{c-prop}
\end{equation}
for all $x\in \Omega$ without loss of generality. 
Indeed, let $c_\alpha$ be given satisfying \eqref{c-rep} and define $\tilde{c}_\alpha$ by
$$\tilde{c}_\alpha(x)_{ijk\ell}:=\tilde{c}_\alpha(x)_{ij\ell k}:=
\frac{1}{2}\left(c_\alpha(x)_{ijk\ell}+c_\alpha(x)_{ij\ell k}\right).$$
It is clear that $\tilde{c}$ satisfies \eqref{c-prop} and we also
have, using $M\in \S^{n\times n}$, 
\begin{eqnarray*}
\sum_{k,\ell=1}^n \tilde{c}_\alpha(x)_{ijk\ell} M_{k\ell}
&=&
\sum_{k=1}^n \tilde{c}_\alpha(x)_{ijkk} M_{kk}
+
\sum_{1\le k<\ell\le n} \tilde{c}_\alpha(x)_{ijk\ell} [M_{k\ell}+M_{\ell k}]\\
&=&
\sum_{k=1}^n c_\alpha(x)_{ijkk} M_{kk}
+2\sum_{1\le k<\ell\le n} \tilde{c}_\alpha(x)_{ijk\ell} M_{k\ell}\\
&=&
\sum_{k=1}^n c_\alpha(x)_{ijkk} M_{kk}
+\sum_{1\le k<\ell\le n} 
\left(c_\alpha(x)_{ijk\ell}+c_\alpha(x)_{ij\ell k}\right) M_{k\ell}\\
&=&
\sum_{k,\ell=1}^n c_\alpha(x)_{ijk\ell} M_{k\ell}
=
(c_\alpha(x)(M))_{ij}.
\end{eqnarray*}
For  $M\in \S^{n\times n}$ we have $c_\alpha(x)(M)\in \S^{n\times
  n}$ and hence

\begin{eqnarray*}
\sum_{k,\ell=1}^n c_\alpha(x)_{ijk\ell} M_{k\ell}
&=&(c_\alpha(x)(M))_{ij}
=(c_\alpha(x)(M))_{ji}
=\sum_{k,\ell=1}^n c_\alpha(x)_{jik\ell} M_{k\ell}\\
&=&\sum_{k,\ell=1}^n c_\alpha(x)_{jik\ell} M_{\ell k}
=\sum_{k,\ell=1}^n c_\alpha(x)_{ji\ell k} M_{k\ell}
\end{eqnarray*}
as $M\in \S^{n\times n}$. 
Choosing $M=E_{\mu\nu}^s$ to be a basis
``vector'' of $\S^{n\times  n}$ shows, using \eqref{c-prop},
\begin{eqnarray*}
\sum_{k,\ell=1}^n c_\alpha(x)_{ijk\ell}(E_{\mu\nu}^s)_{k\ell} &=&
\frac{1}{\sqrt{2}} \left[
  c_\alpha(x)_{ij\mu\nu}+c_\alpha(x)_{ij\nu\mu}\right]
  =\sqrt{2}c_\alpha(x)_{ij\mu\nu},\\
\sum_{k,\ell=1}^n c_\alpha(x)_{ji\ell k}(E_{\mu\nu}^s)_{k\ell} &=&
\frac{1}{\sqrt{2}} \left[
  c_\alpha(x)_{ji\nu\mu}+c_\alpha(x)_{ji\mu\nu}\right]
  =\sqrt{2}c_\alpha(x)_{ji\nu\mu},
\end{eqnarray*}
i.e. 
\begin{equation}\label{c:symm}
c_\alpha(x)_{ijk\ell} = c_\alpha(x)_{ji\ell k}.
\end{equation}
For \eqref{F:symm} note that
\[
D^\alpha \Phi(\cdot,x)_{i,j,\mu,\nu}=
D^\alpha \phi(\cdot,x) \delta_{i\mu}\delta_{j\nu}
\]
so that
\begin{eqnarray*}
F(\Phi(\cdot,x)_{\cdot,\cdot,\mu,\nu})_{ij}
&=&
\sum_{|\alpha|\le m} D^\alpha \phi(\cdot,x)\sum_{k,\ell=1}^nc_{\alpha}(x)_{ijk\ell}
\delta_{k\mu}\delta_{\ell\nu}
=
\sum_{|\alpha|\le m} D^\alpha \phi(\cdot,x)c_{\alpha}(x)_{ij\mu\nu} \\
&=&
\sum_{|\alpha|\le m} D^\alpha \phi(\cdot,x)c_{\alpha}(x)_{ji\nu\mu} 
=\sum_{|\alpha|\le m} D^\alpha \phi(\cdot,x)\sum_{k,\ell=1}^nc_{\alpha}(x)_{jik\ell}
\delta_{k\nu}\delta_{\ell\mu}\\
&=&
F(\Phi(\cdot,x)_{\cdot,\cdot,\nu,\mu})_{ji},
\end{eqnarray*}
where we have used  \eqref{c:symm}.
\end{proof}

\begin{proposition}\label{pro:independent}
Let $\sigma>m+d/2$ and $F$ be a linear differential operator
$F:H^{\sigma}(\Omega;\R^{n\times n})\to H^\tau(\Omega;\R^{n\times n})$
($F:H^{\sigma}(\Omega;\S^{n\times n})\to H^\tau(\Omega;\S^{n\times
  n})$) as in Definition \ref{def:operator}. Let $X=\{x_1,\ldots,x_N\}$ be a
set of pairwise 
distinct points which are not singular points of $F$. 

Then the functionals
\[
\lambda_k^{(i,j)} (M):=e_i^TF(M)(x_k)e_j, \qquad 1\le k\le N, 1\le
i,j\le n \quad (1\le i\le j\le n).
\]
are bounded and linearly independent over
$H^{\sigma}(\Omega;\R^{n\times n})$ ($H^{\sigma}(\Omega;\S^{n\times
  n})$). 
\end{proposition}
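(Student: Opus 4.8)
The plan is to separate the two assertions. Boundedness is immediate and essentially already recorded in the proof of Theorem~\ref{th:error}: since $\sigma>m+d/2$ we have $\tau=\sigma-m>d/2$, so $F(M)\in H^\tau(\Omega;W)\subseteq C(\Omega;W)$ by the Sobolev embedding theorem, and therefore
\[
|\lambda_k^{(i,j)}(M)| = \bigl|\bigl(F(M)(x_k)\bigr)_{ij}\bigr| \le \|F(M)\|_{L_\infty(\Omega;W)} \le C\|F(M)\|_{H^\tau(\Omega;W)}\le C\|M\|_{H^\sigma(\Omega;W)}
\]
by continuity of $F$; hence each $\lambda_k^{(i,j)}$ lies in the dual. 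The real content is linear independence, which I would prove by a localisation argument in the spirit of the scalar case \cite{Giesl-Wendland-07-1}.

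Assume a vanishing linear combination $\sum_{k=1}^N\sum_{(i,j)} c_k^{(i,j)}\lambda_k^{(i,j)}\equiv 0$ on $H^\sigma(\Omega;W)$, where $(i,j)$ runs over $1\le i,j\le n$ (resp.\ $1\le i\le j\le n$). Fix $k_0$. Since $x_{k_0}$ is not a singular point, Definition~\ref{def:operator} gives a multi-index $\alpha_0$ with $|\alpha_0|\le m$ for which $c_{\alpha_0}(x_{k_0})\in\cL(W)$ is invertible, hence surjective. Because the nodes are pairwise distinct and $\Omega$ is open, choose $\psi\in C_c^\infty(\Omega)$ with $\psi\equiv 1$ near $x_{k_0}$ and $\psi\equiv 0$ near every other $x_k$. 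For arbitrary $E\in W$ (symmetric when $W=\S^{n\times n}$) set
\[
M_E(x) := \psi(x)\,(x-x_{k_0})^{\alpha_0}\,E \in C_c^\infty(\Omega;W)\subseteq H^\sigma(\Omega;W).
\]
Near $x_{k_0}$, $M_E$ coincides with the monomial $(x-x_{k_0})^{\alpha_0}E$, so $D^\alpha M_E(x_{k_0}) = (\alpha_0!)\,\delta_{\alpha\alpha_0}\,E$ for all $\alpha$; near the other nodes $M_E$ vanishes to infinite order. Substituting $M_E$ into the linear combination, every term with $k\ne k_0$ disappears because $F(M_E)(x_k)=\sum_{|\alpha|\le m}c_\alpha(x_k)[D^\alpha M_E(x_k)]=0$, while $F(M_E)(x_{k_0}) = (\alpha_0!)\,c_{\alpha_0}(x_{k_0})[E]$, so
\[
0 = (\alpha_0!)\sum_{(i,j)} c_{k_0}^{(i,j)}\bigl(c_{\alpha_0}(x_{k_0})[E]\bigr)_{ij}\qquad\text{for every }E\in W.
\]
As $c_{\alpha_0}(x_{k_0})$ is surjective, $c_{\alpha_0}(x_{k_0})[E]$ runs over all of $W$; taking it with a single nonzero entry (a symmetric pair of entries in the $\S^{n\times n}$ case) forces $c_{k_0}^{(i,j)}=0$ for every $(i,j)$. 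Since $k_0$ was arbitrary, all coefficients vanish.

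The one place requiring care is the construction of $M_E$: the cut-off $\psi$ takes care of killing all derivatives up to order $m$ at the remaining nodes, while the pure monomial factor $(x-x_{k_0})^{\alpha_0}$ is what makes $D^{\alpha_0}M_E(x_{k_0})$ the only surviving derivative at $x_{k_0}$ (including, trivially, the case $\alpha_0=0$, where $M_E=\psi E$ and only the value at $x_{k_0}$ survives). For the symmetric case one should additionally note that restricting $E$ to $\S^{n\times n}$ keeps $M_E$ symmetric-matrix-valued, so that $M_E$ is an admissible test element of $H^\sigma(\Omega;\S^{n\times n})$, and that a linear functional on $\S^{n\times n}$ which vanishes identically has all its coefficients (indexed by $i\le j$) equal to zero.
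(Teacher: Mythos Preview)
Your proof is correct and follows essentially the same localisation argument as the paper: both construct a test function supported away from the other nodes whose only nonvanishing derivative of order $\le m$ at the chosen node is the one matching an invertible coefficient $c_{\alpha_0}(x_{k_0})$. The only cosmetic differences are that the paper applies $c_\beta(x_\ell)^{-1}$ to a fixed basis matrix $E^s_{i^*j^*}$ up front (so that $F(G)(x_\ell)=E^s_{i^*j^*}$ directly), whereas you let $E$ range over $W$ and invoke surjectivity afterwards; and the paper writes the bump explicitly as $g\bigl((x-x_\ell)/q_X\bigr)$ rather than an abstract $\psi$.
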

\begin{proof}
The boundedness of the functionals is clear from the assumptions.

%
%
%

We will prove the linear independence of the functionals over
$H^{\sigma}(\Omega;\S^{n\times n})$.  In Theorem \ref{th:error}, we
have already seen  that the functionals belong to the dual of
$H^{\sigma}(\Omega;\S^{n\times n})$. 

Now assume that
\[
\sum_{k=1}^N\sum_{1\le i\le j\le n}d_k^{(i,j)} \lambda_k^{(i,j)} =0
\]
on $H^{\sigma}(\Omega;\S^{n\times n})$ with certain coefficients
$d_k^{(i,j)}$. We need to show that all $d_k^{(i,j)}=0$. 

To this end, let $g\in C_0^\infty(\R^d;\R)$ be a flat bump function,
i.e. a nonnegative, compactly supported function with support
$B(0,1)$, satisfying $g(x)=1$ on $B(0,1/2)$. 
%
%

Fix $1\le \ell \le N$, as well as $i^*,j^*\in \{1,\ldots,n\}$ with
$i^*\le j^*$. Since
$x_\ell$ is no singular point of $F$ there exists a minimal
$|\beta|\le m$ such that $c_{\beta}(x_\ell)$ is invertible. The
function 
\[
g_\ell(x)=\frac{1}{\beta !}(x-x_\ell)^\beta
g\left(\frac{x-x_\ell}{q_X}\right),
\]
where $q_X$ denotes the separation distance of $X$, then satisfies 
$D^\alpha g_\ell(x_k)=0$ for all $|\alpha|\le m$ and $x_k\not=x_\ell$.
Moreover, $D^\alpha g_\ell(x_\ell)=0$ for $\alpha\not=\beta$ and $D^\beta g_\ell(x_\ell)=1$. Hence, defining the matrix valued function
$G\in  H^\sigma(\Omega;\S^{n\times n})$ by 
$G(x)=g_\ell(x) c_\beta (x_\ell)^{-1} E_{i^* j^*}^s$,
we have
\begin{eqnarray*}
0&=&\sum_{k=1}^N\sum_{1\le i\le j\le n}d_k^{(i,j)} \lambda_k^{(i,j)} (G)\\
&=&\sum_{k=1}^N\sum_{1\le i\le j\le n}d_k^{(i,j)} e_i^TF(G)(x_k)e_j\\
&=&\sum_{k=1}^N\sum_{|\alpha|\le m}\sum_{1\le i\le j\le n}
d_k^{(i,j)} e_i^T c_\alpha(x_k)c_\beta(x_\ell)^{-1}E_{i^*j^*}^s e_j\ D^\alpha g_\ell(x_k)\\
&=&\sum_{1\le i\le j\le n}
d_\ell^{(i,j)} e_i^T c_\beta(x_\ell)c_\beta(x_\ell)^{-1}E_{i^*j^*}^se_j \\
&=&c_{i^*,j^*}d_\ell^{(i^*,j^*)},
\end{eqnarray*}
where $c_{i^*,j^*}=\frac{1}{\sqrt{2}}$ for $i^*\not=j^*$ and $c_{i^*,i^*}=1$.
Since $\ell, i^*,j^*$ were chosen arbitrarily, this shows the linear
independence. 
\end{proof}

Now we consider a special type of $F$, which will later arise in the
application within Dynamical Systems. 

\begin{theorem}\label{th:errordynsys}
Let $\Omega\subseteq\R^d$ be a bounded domain with Lipschitz continous
boundary. Let $\sigma>d/2+1$ and let $V\in H^{\sigma-1}(\Omega;\R^{n\times n})$
and $f\in H^{\sigma-1}(\Omega;\R^n)$.
Define $F:H^{\sigma}(\Omega;\S^{n\times n})\to H^{\sigma-1}(\Omega;\S^{n\times n})$ by
\[
F(M)(x):=V(x)^TM(x)+M(x)V(x)+M'(x),
\]
where $(M'(x))_{ij}=\nabla M_{ij}(x)\cdot f(x)$.

For each $x_0\in \Omega$ with $f(x_0)=0$ (equilibrium point), we
assume that all eigenvalues of $V(x_0)$ have negative real part
(positive real part). 

Finally, let $X=\{x_1,\ldots,x_N\}\subseteq\Omega$ be a set of pairwise distinct points and let
\[
\lambda_k^{(i,j)} (M):=e_i^TF(M)(x_k)e_j, \qquad 1\le k\le N,\quad  1\le
i\le j\le n.
\]
Then, each $\lambda_k^{(i,j)}$ belongs to the dual of
$H^\sigma(\Omega;\S^{n\times n})$ and they are linearly independent.  If
$S$ denotes the optimal recovery of $M\in H^\sigma(\Omega;\S^{n\times
  n})$ in the sense of Definition \ref{optrec} using these functionals,
then
\[
\|F(M)-F(S)\|_{L_\infty(\Omega;\S^{n\times n})} \le C h_{X,\Omega}^{\sigma-1-n/2}
\|M\|_{H^\sigma(\Omega;\S^{n\times n})}.
\]
\end{theorem}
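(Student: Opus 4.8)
The plan is to recognise $F$ as a first-order differential operator fitting Definition \ref{def:operator}, to show that $\Omega$ contains no singular point of $F$, and then to read off both the linear independence and the error bound from Proposition \ref{pro:independent} and Theorem \ref{th:error} applied with $m=1$, $\tau=\sigma-1$ (note that here $d=n$, since $\nabla M_{ij}\cdot f$ only makes sense when $f$ takes values in $\R^d$).

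First I would check that $F$ maps $H^{\sigma}(\Omega;\S^{n\times n})$ boundedly into $H^{\sigma-1}(\Omega;\S^{n\times n})$. Componentwise $F(M)_{ij}$ is a finite sum of pointwise products $V_{ki}M_{kj}$, $M_{ik}V_{kj}$ and $(\partial_p M_{ij})f_p$. Since $\sigma-1>d/2$, the space $H^{\sigma-1}(\Omega)$ is a multiplication algebra, the embedding $H^{\sigma}(\Omega)\hookrightarrow H^{\sigma-1}(\Omega)$ is bounded, and $\partial_p\colon H^{\sigma}(\Omega)\to H^{\sigma-1}(\Omega)$ is bounded; combining these yields a constant $C>0$ with $\|F(M)\|_{H^{\sigma-1}(\Omega;\R^{n\times n})}\le C\bigl(\|V\|_{H^{\sigma-1}}+\|f\|_{H^{\sigma-1}}\bigr)\|M\|_{H^{\sigma}}$. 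Symmetry of $F(M)$ for symmetric $M$ follows from $(V^{T}M+MV)^{T}=MV+V^{T}M$ and from the componentwise action of $M\mapsto M'$. Hence $F$ has the form required in Definition \ref{def:operator} with $c_0(x)[M]=V(x)^{T}M+MV(x)$ and $c_{e_p}(x)[M]=f_p(x)M$ for $1\le p\le d$, each $c_\alpha(x)$ mapping $\S^{n\times n}$ into itself.

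Next I would locate the singular points. For $\alpha=e_p$ the map $c_{e_p}(x)=f_p(x)\,\mathrm{id}$ is invertible iff $f_p(x)\neq0$, so if $x$ is a singular point of $F$ then $f(x)=0$, i.e.\ $x$ is an equilibrium. For such an $x$, $c_0(x)$ is the Lyapunov operator $M\mapsto V(x)^{T}M+MV(x)$; regarded as acting on all of $\R^{n\times n}$, a vectorisation argument ($\operatorname{vec}(V^{T}M+MV)=(I\otimes V^{T}+V^{T}\otimes I)\operatorname{vec}(M)$) shows its eigenvalues are the numbers $\mu_i+\mu_j$, where $\mu_1,\dots,\mu_n$ are the eigenvalues of $V(x)$. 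By hypothesis all $\mu_i$ have strictly negative (resp.\ strictly positive) real part, so $\operatorname{Re}(\mu_i+\mu_j)\neq0$; thus $c_0(x)$ is injective, hence invertible, on $\R^{n\times n}$ and in particular on the invariant subspace $\S^{n\times n}$. Consequently $\Omega$ has no singular point of $F$, and since the $x_k$ are pairwise distinct, Proposition \ref{pro:independent} gives that the functionals $\lambda_k^{(i,j)}$ are bounded on $H^{\sigma}(\Omega;\S^{n\times n})$ and linearly independent.

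Finally, with $F$ linear and bounded, the functionals in the dual and linearly independent, Theorem \ref{th:error} (symmetric case, with $\tau=\sigma-1$) applies verbatim and yields $\|F(M)-F(S)\|_{L_\infty(\Omega;\S^{n\times n})}\le C\,h_{X,\Omega}^{\tau-d/2}\|M\|_{H^{\sigma}(\Omega;\S^{n\times n})}=C\,h_{X,\Omega}^{\sigma-1-n/2}\|M\|_{H^{\sigma}(\Omega;\S^{n\times n})}$, which is the claimed estimate. I expect the only real work to be the boundedness step — carefully pairing the Sobolev orders in the product estimates so the image genuinely lies in $H^{\sigma-1}$ — together with the short spectral computation confirming that the equilibrium hypothesis is precisely what excludes singular points; everything else is a direct citation of the earlier results.
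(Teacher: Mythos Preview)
Your proposal is correct and follows essentially the same route as the paper: identify $F$ as a first-order operator in the sense of Definition~\ref{def:operator} with $c_0(x)[M]=V(x)^TM+MV(x)$ and $c_{e_p}(x)[M]=f_p(x)M$, rule out singular points by the two-case split on whether $f(x)=0$, and then invoke Proposition~\ref{pro:independent} and Theorem~\ref{th:error} with $\tau=\sigma-1$. The only notable variation is that you justify invertibility of the Lyapunov operator at an equilibrium via the Kronecker-product spectral computation, whereas the paper simply cites the standard solvability theorem for the Lyapunov equation; both arguments are equivalent here.
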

\begin{proof}
The operator $F$ is a differential operator of degree 1 as in Definition \ref{def:operator} with
\begin{eqnarray*}
 c_0(x) (M)&=&V(x)^TM+MV(x)\\
 c_{e_i}(x)(M)&=&f_i(x) M
 \end{eqnarray*}
We have $x\mapsto c_\alpha(x)[D^\alpha M(x)]\in
H^{\sigma-1}(\Omega;\S^{n\times n})$ for every $M\in H^\sigma
(\Omega;\S^{n\times n})$.
  To apply Proposition \ref{pro:independent}, we have to show that there are no singular points in $\Omega$.

Case 1: If $f(x)\not=0$, then there is an $i^*\in\{1,\ldots,n\}$ with $f_{i^*}(x)\not=0$ and hence
$c_{e_{i^*}}(x)$ is invertible with $c_{e_{i^*}}(x)^{-1}=\frac{1}{f_i(x)}\mbox{id}$.

Case 2: If $f(x)=0$, then by assumption $V(x)$ ($-V(x)$) has eigenvalues with only negative real part. Then the so-called Lyapunov equation
\[
V(x)^TM+MV(x)=C \qquad (-C)
\]
has a unique solution for every $C\in \S^{n\times n}$ , see e.g. \cite[Theorem 4.6]{Khalil-92-1}, i.e. the operator $c_0(x)$ is injective and, because it maps the finite-dimensional space $\S^{n\times n}$ into itself, also bijective.

The rest follows from the previous results, in particular Theorem
\ref{th:error} by setting $\tau=\sigma-1$.
\end{proof}


\section{Contraction metric}
\label{sec3}

In this section we will apply the previous general results to the
ODE problem of constructing a contraction metric mentioned in the
introduction. Hence, we study the autonomous ODE
\begin{eqnarray}
\dot{\bx}&=&\bff(\bx)\label{ODE}
\end{eqnarray}
where $\bff\in C^1(\R^n,\R^n)$; further assumptions on the smoothness
of $\bff$ will be made later. The solution $\bx(t)$ with initial
condition $\bx(0)=\xi$ is denoted by $\bx(t)=:S_t\xi$ and is assumed
to exist for all $t\ge 0$. 

We are interested in the existence, uniqueness and exponential
stability of an equilibrium, as well as the determination of its basin
of attraction. An equilibrium is a point $\bx_0\in \R^n$ such that
$f(x_0)=0$. Its basin of attraction is defined by 
$A(x_0)=\{x\in \R^n\mid \lim_{t\to\infty} S_tx=x_0\}.$

If the equilibrium is known, then Lyapunov functions are one way of
analysing the basin of attraction of the equilibrium as well as its
basin of attraction, see the recent survey article
\cite{Giesl-Hafstein-15-2} for constructing such Lyapunov functions. A
different way of studying stability and the 
basin of attraction, which does not require any knowledge about the
equilibrium and which is also robust with respect to perturbations of
the ODE uses contraction metrics. A Riemannian contraction metric is a
matrix-valued function $M\colon \R^n\to \S^{n\times n}$, such that
$M(x)$ is positive definite for every $x$. It defines a
(point-dependent) scalar product on $\R^n$ by $\langle
v,w\rangle_M=v^TM(x)w$. 
For $M$ to be a contraction metric, we require the distance between
adjacent solutions to decrease with respect to such a contraction
metric. This can be expressed by the negative definiteness of $F(M)(x)
=Df^T(x)M(x)+M(x)Df(x)+M'(x)$,
see \eqref{pde-const} and Theorem \ref{th:1}.

Contraction analysis can be used to study the distance between
trajectories, without reference to an attractor, establishing
(exponential) attraction of adjacent trajectories, see
\cite{Krasovskii-59-1,Hahn-67-1}, see also \cite[Section
  2.10]{Giesl-Hafstein-15-1}; it can be generalised to the study of a
Finsler-Lyapunov function \cite{Forni-Sepulchre-14-1}. 

If contraction to a trajectory through $\bx$ occurs with respect to
all adjacent trajectories, then solutions converge to an
equilibrium. If the attractor is, e.g., a periodic orbit, then
contraction cannot occur in the direction tangential to the
trajectories. Hence, contraction analysis for periodic orbits assumes
contraction only to occur in a suitable $(n-1)$-dimensional subspace
of the tangent space.  
Contraction metrics for periodic orbits have been studied by Borg
\cite{Borg-60-1} with the Euclidean metric and Stenstr\"om
\cite{Stenstrom-62-1} with a general Riemannian metric. 
Further results using a contraction metric to establish existence,
uniqueness, stability and information about the basin of attraction of
a periodic orbit have been obtained in
\cite{Hartman-64-1,Leonov-etal-96-1}.  

Only few converse theorems for contraction metrics have been obtained,
establishing the existence of a contraction metric, see
\cite{Giesl-15-1} for some references. 
 Constructive converse theorems, providing algorithms for the explicit
 construction of a contraction metric, are given in
 \cite{Aylward-etal-08-1} for the global stability of an equilibrium
 in  polynomial systems, using Linear Matrix Inequalities (LMI) and
 sums of squares  (SOS). An algorithm to construct a continuous
 piecewise affine (CPA) contraction metric for periodic orbits in
 time-periodic systems using semi-definite optimization has been
 proposed in \cite{Giesl-Hafstein-13-1}.

In \cite{Giesl-15-1}, the existence of a contraction metric for an
equilibrium was shown which satisfies $F(M)=-C$, where $C$ is a given
symmetric positive definite matrix. In  
\cite{Giesl-Wendland-??-1}, summarised in the following theorem, we
establish existence and uniqueness of solutions of the more general
matrix-valued PDE \eqref{matrixeq3}. 

\begin{theorem}\label{est}
Let $f\in C^\sigma(\R^n,\R^n)$, $\sigma\ge 2$. Let $x_0$ be an
exponentially stable equilibrium of $\dot{x}=f(x)$ with basin of
attraction $A(x_0)$.  Let $C_i\in C^{\sigma-1}(A(\bx_0),\S^{n\times
  n})$, $i=1,2$, such that $C_i(\bx)$ is a positive definite matrix
for all $\bx\in A(\bx_0)$. 
Then,  for $i=1,2$ the matrix equation
\begin{equation}\label{matrixeq3}
Df^T(x)M_i(x)+M_i(x)Df(x)+M_i'(x)=-C_i(x)
\end{equation}
has a unique solution $M_i\in C^{\sigma-1}(A(x_0), \S^{n\times n})$.

Let $K\subseteq A(\bx_0)$ be a compact set. Then there is a constant
$c$, independent of $M_i$ and $C_i$ such that 
\[
\|M_1-M_2\|_{L_\infty(K)}\le c \|C_1-C_2\|_{L_\infty(\overline{\gamma^+(K)})}
\]
where $\gamma^+(K)=\bigcup_{t\ge 0}S_tK$.
\end{theorem}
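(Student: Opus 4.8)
The plan is to produce the solution by integrating the PDE \eqref{matrixeq3} along the flow of $\dot x=f(x)$ and then to read off regularity, definiteness, uniqueness, and the stability estimate from the resulting integral representation. Write $S_t\xi$ for the flow and $DS_t(\xi)$ for the derivative of $\xi\mapsto S_t\xi$, which solves the variational equation $\frac{d}{dt}DS_t(\xi)=Df(S_t\xi)DS_t(\xi)$, $DS_0(\xi)=\id$. If $M\in C^1(A(x_0),\S^{n\times n})$ solves \eqref{matrixeq3}, set $W(t):=DS_t(\xi)^TM(S_t\xi)DS_t(\xi)$; using the variational equation and $\frac{d}{dt}M(S_t\xi)=M'(S_t\xi)$ one computes
\[
\frac{d}{dt}W(t)=DS_t(\xi)^T\big[Df^TM+MDf+M'\big](S_t\xi)\,DS_t(\xi)=-DS_t(\xi)^TC(S_t\xi)\,DS_t(\xi).
\]
Since $x_0$ is exponentially stable, $\|DS_t(\xi)\|\to 0$ as $t\to\infty$ for every $\xi\in A(x_0)$, so $W(t)\to 0$, and integrating from $0$ to $\infty$ forces
\[
M(\xi)=\int_0^\infty DS_t(\xi)^TC(S_t\xi)\,DS_t(\xi)\,dt.
\]
This pins down the only candidate, and it also yields uniqueness: the difference $N:=M_1-M_2$ of two solutions with the same right-hand side solves the homogeneous equation, so $DS_t(\xi)^TN(S_t\xi)DS_t(\xi)\equiv N(\xi)$, and letting $t\to\infty$ (with $N$ continuous, hence bounded near $x_0$, and $DS_t(\xi)\to 0$) forces $N(\xi)=0$.

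\textbf{Well-posedness and regularity.} Next I would show that the integral converges, that it defines a $C^{\sigma-1}$ function, and that it solves \eqref{matrixeq3}. The key input is an exponential estimate: near $x_0$ one has $\|DS_t\|\le Ke^{-\nu t}$, and the higher-order variational flows $D^kS_t$, $k\le\sigma-1$, solve linear equations driven by this exponentially contracting variational equation and hence also decay exponentially there; for a compact $K\subseteq A(x_0)$ every trajectory from $K$ enters a fixed neighbourhood of $x_0$ within a uniformly bounded time, and the cocycle relation $DS_t(\xi)=DS_{t-T}(S_T\xi)DS_T(\xi)$ (with its analogues for $D^kS_t$, bounded on the compact remaining $t$-interval) propagates the decay with a constant uniform over $K$. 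Since $f\in C^\sigma$ the flow is $C^\sigma$ in $\xi$ and $C\in C^{\sigma-1}$, so the integrand is $C^{\sigma-1}$ in $\xi$, and these uniform bounds justify differentiation under the integral sign up to order $\sigma-1$; hence $M\in C^{\sigma-1}(A(x_0),\S^{n\times n})$. Symmetry is clear from the formula, and positive definiteness follows since $C(S_t\xi)$ is positive definite and $DS_t(\xi)$ invertible, so the integrand is positive semidefinite and strictly positive at $t=0$. That $M$ solves \eqref{matrixeq3} comes from differentiating the representation along the orbit: a change of variables $u=t+s$ and the cocycle relation write $M(S_s\xi)$ in terms of $DS_s(\xi)^{-1}$ and $\int_s^\infty DS_u(\xi)^TC(S_u\xi)DS_u(\xi)\,du$; differentiating at $s=0$ produces $-C(\xi)$ from the lower limit and $-(Df(\xi)^TM(\xi)+M(\xi)Df(\xi))$ from differentiating $DS_s^{-1}$, i.e. $M'(\xi)=-C(\xi)-Df(\xi)^TM(\xi)-M(\xi)Df(\xi)$, which is exactly \eqref{matrixeq3}.

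\textbf{Stability estimate.} This is now immediate from linearity of the representation in $C$: for $\xi\in K$,
\[
M_1(\xi)-M_2(\xi)=\int_0^\infty DS_t(\xi)^T(C_1-C_2)(S_t\xi)\,DS_t(\xi)\,dt,
\]
so $\|M_1(\xi)-M_2(\xi)\|\le\big(\int_0^\infty\|DS_t(\xi)\|^2\,dt\big)\sup_{t\ge0}\|(C_1-C_2)(S_t\xi)\|$. Setting $c:=\sup_{\xi\in K}\int_0^\infty\|DS_t(\xi)\|^2\,dt$, which is finite by the uniform exponential bound above, and noting that $S_t\xi\in\gamma^+(K)\subseteq\overline{\gamma^+(K)}$ for all $t\ge0$, this is precisely $\|M_1-M_2\|_{L_\infty(K)}\le c\,\|C_1-C_2\|_{L_\infty(\overline{\gamma^+(K)})}$.

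\textbf{Main obstacle.} I expect the real work to be the uniform-over-$K$ control of $\int_0^\infty\|DS_t(\xi)\|^2\,dt$ and of the higher-derivative integrands needed for the $C^{\sigma-1}$ claim: the hypothesis only gives exponential contraction locally at $x_0$, and one must transfer it to the whole compact set $K$ — and check that the constants stay bounded — via the cocycle property, continuity of $D^kS_t$ on compact $t$-intervals, and a uniform entrance-time bound for trajectories starting in $K$. The remaining steps (interchanging differentiation and integration, the orbit-derivative computation, symmetry and definiteness, uniqueness) are routine once these bounds are in place.
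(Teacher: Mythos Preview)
The paper does not actually prove this theorem: it is stated as a summary of results established in the companion paper \cite{Giesl-Wendland-??-1}, so there is no proof here to compare against directly. Your approach --- representing the solution by the flow integral $M(\xi)=\int_0^\infty DS_t(\xi)^TC(S_t\xi)DS_t(\xi)\,dt$, reading off symmetry, positive definiteness, uniqueness, and the Lipschitz dependence on $C$ from this formula, and obtaining the $C^{\sigma-1}$ regularity by differentiating under the integral sign using uniform exponential decay of the (higher) variational flows --- is the standard and correct strategy for this class of matrix Lyapunov equations along orbits, and is almost certainly what the cited companion paper does.

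Your identification of the main obstacle is accurate: the substantive work is the uniform-in-$K$ exponential bound on $DS_t(\xi)$ and its derivatives up to order $\sigma-1$, obtained by combining local exponential contraction near $x_0$ (from the spectrum of $Df(x_0)$) with a uniform entrance time into that neighbourhood and the cocycle relation. One small point worth tightening: in the uniqueness step you write that $DS_t(\xi)^TN(S_t\xi)DS_t(\xi)\equiv N(\xi)$, but in fact the argument gives that this quantity is constant in $t$ (its derivative vanishes), hence equals its value at $t=0$, namely $N(\xi)$; then letting $t\to\infty$ with $N$ bounded on the compact closure of the forward orbit and $DS_t(\xi)\to 0$ gives $N(\xi)=0$. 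Otherwise the sketch is sound.
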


The theorem shows that if $\|F(M)(x)-F(S)(x)\|\le \epsilon$ for all
$x\in \overline{\gamma^+(K)}$, then $\|M(x)-S(x)\|\le c\epsilon$ for
all $x\in K$. In particular, as $M$ is positive definite in $K$, so is
$S$, if $\epsilon$ is small enough. 

Note that for a positively invariant and compact set $K$ we have
$\overline{\gamma^+(K)}=K$. 



Let $f\in C^\sigma(\R^n,\R^n)$, $\sigma\ge 2$. 
In what follows, we will
always have $d=n$.  Let $x_0$ be an
exponentially stable equilibrium of 
$\dot{x}=f(x)$ with basin of attraction $A(x_0)$.  

Then, our strategy for constructing a Riemannian contraction metric
is to choose a symmetric and positive definite matrix $C\in\S^{n\times
  n}$ and to approximate the partial differential equation
\begin{equation}\label{pde-const}
F(M)(x):=Df^T(x)M(x)+M(x)Df(x)+\nabla M(x)\cdot f(x) = - C.
\end{equation}
using generalised collocation as described in the previous
sections. Here we have used the simplified notation $\nabla M(x)\cdot
f(x)$ to denote the $n\times n$ matrix with entries $\nabla
M_{ij}(x)^Tf(x)$. This can be summarised as follows. We set
$W=\S^{n\times n}$ to be 
the space of all symmetric $n\times n$ matrices with inner product as
in \eqref{W}. Furthermore, we define $\cH=H^\sigma(\Omega;W)$
to be the matrix-valued Sobolev space of Definition \ref{def:Sob}
with reproducing kernel  $\Phi:\Omega\times\Omega\to\cL(W)$
as in \eqref{phi-tensor}, where $\Omega\subseteq
\R^n$ will be chosen appropriately later on.
We then define the linear functionals
$\lambda_k^{(i,j)}:H^\sigma(\Omega;W)\to\R$ by
\begin{eqnarray}
\lambda_{k}^{(i,j)}(M) &=& e_i^T \left[Df^T(x_k)M(x_k)
  +M(x_k)Df(x_k)+\nabla M(x_k) \cdot f(x_k)\right]e_j\label{functionals}\\
&=:& e_i^T F_k(M)e_j\nonumber\\
&=& e_i^T F(M)(x_k)e_j\nonumber
\end{eqnarray}
for $x_k\in\Omega$, $1\le k\le N$ and $1\le i\le j\le n$. Here, $e_i$
denotes once again the $i$th unit vector in $\R^n$.

Then, we can compute the solution $S$ of the optimal recovery problem
as in Definition \ref{optrec}. This gives the following result.

\begin{theorem}\label{th:S} 
Let $\sigma>n/2+1$ and let $\Phi:\Omega\times\Omega\to\cL(\S^{n\times
  n})$ be a reproducing kernel of $H^\sigma(\Omega;\S^{n\times n})$.
Let $X=\{x_1,\ldots,x_N\}\subseteq\Omega$ be pairwise distinct points and let
$\lambda_k^{(i,j)}\in H^\sigma(\Omega;\S^{n\times n})^*$, $1\le k\le N$ and $1\le
i,j\le n$ be defined by (\ref{functionals}) with $V:=Df$ satisfying the
conditions of Theorem \ref{th:errordynsys}.
Then there is a unique  function $S\in H^\sigma(\Omega;\S^{n\times n})$ solving
\[
\min\left\{ \|M\|_{\cH} : \lambda_{k}^{(i,j)}(M) = -C_{ij}, 1\le
i\le j\le n, 1\le k\le N\right\},
\]
where $C=(C_{ij})_{i,j=1,\ldots,n}$ is a symmetric, positive definite matrix.

It has the form
\begin{eqnarray}
S(x) &=& \sum_{k=1}^N\sum_{1\le i\le j\le n} \gamma_k^{(i,j)}
\sum_{1\le \mu\le \nu\le n}\lambda_{k}^{(i,j)}(\Phi(\cdot,x)E^s_{\mu\nu})E^s_{\mu\nu} \nonumber\\
&=& \sum_{k=1}^N\sum_{1\le i\le j\le n}
\gamma_k^{(i,j)}
\bigg[
\sum_{ \mu=1}^n
F_k(\Phi(\cdot,x)_{\cdot,\cdot,\mu,\mu})_{ij}E_{\mu\mu}\nonumber\\
&&+
\frac{1}{2}
\sum_{\substack{ \mu,\nu=1\\\mu\not =\nu}}^n
[F_k(\Phi(\cdot,x)_{\cdot,\cdot,\mu,\nu})_{ij}+
F_k(\Phi(\cdot,x)_{\cdot,\cdot,\nu,\mu})_{ij}]E_{\mu\nu}\bigg],\label{form1}
\end{eqnarray}
where the coefficients $\gamma_k=(\gamma_k^{(i,j)})_{1\le i\le j\le n}$
are determined by $\lambda_\ell^{(i,j)}(S)=-C_{ij}$ for $1\le i\le j\le n$.

If the kernel $\Phi$ is given by \eqref{phi-tensor} then we also have the alternative expression
\begin{eqnarray}
S(x) &=&
\sum_{k=1}^N\sum_{i,j=1}^n\beta_k^{(i,j)} \sum_{\mu,\nu=1}^n
F_k(\Phi(\cdot,x)_{\cdot,\cdot,\mu,\nu})_{ij} E_{\mu\nu}\label{form2} 
\end{eqnarray}
where the symmetric matrices $\beta_k\in \S^{n\times n}$ are defined by $\beta_k^{(j,i)}=\beta_k^{(i,j)}=\frac{1}{2}\gamma_k^{(i,j)}$ if $i\not= j$ and
$\beta_k^{(i,i)}=\gamma_k^{(i,i)}$.
\end{theorem}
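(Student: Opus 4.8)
The plan is to assemble the statement from results already established. Existence and uniqueness of $S$ follow immediately from Theorem~\ref{th:mini} once we know the functionals $\lambda_k^{(i,j)}$ are bounded and linearly independent; but this is exactly what Theorem~\ref{th:errordynsys} delivers, since $V=Df$ satisfies the required eigenvalue hypothesis by assumption and the operator $F$ has the stated form. So the first step is simply to invoke Theorem~\ref{th:errordynsys} to conclude that the $\lambda_k^{(i,j)}$ lie in $H^\sigma(\Omega;\S^{n\times n})^*$ and are linearly independent, whence the minimisation problem has a unique minimiser $S$.

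The second step is to read off the form of $S$. By Theorem~\ref{th:mini} applied with $H=H^\sigma(\Omega;\S^{n\times n})$, the minimiser is a linear combination $S=\sum_{k,i\le j}\gamma_k^{(i,j)} v_{k}^{(i,j)}$ of the Riesz representers of the functionals, with coefficients $\gamma_k=(\gamma_k^{(i,j)})$ determined by the generalised interpolation conditions $\lambda_\ell^{(i,j)}(S)=-C_{ij}$. The Riesz representer of $\lambda_k^{(i,j)}$ is given by \eqref{Rieszr}, i.e. $v_k^{(i,j)}(x)=\sum_{1\le\mu\le\nu\le n}\lambda_k^{(i,j)}(\Phi(\cdot,x)E^s_{\mu\nu})E^s_{\mu\nu}$; substituting this yields the first displayed expression \eqref{form1}. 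To obtain the explicit second line, I expand the action of $\lambda_k^{(i,j)}=e_i^TF_k(\cdot)e_j=e_i^TF(\cdot)(x_k)e_j$ on $\Phi(\cdot,x)E^s_{\mu\nu}$: split the sum over $1\le\mu\le\nu\le n$ into the diagonal part ($\mu=\nu$, where $E^s_{\mu\mu}=E_{\mu\mu}$ and the term contributes $F_k(\Phi(\cdot,x)_{\cdot,\cdot,\mu,\mu})_{ij}E_{\mu\mu}$) and the off-diagonal part ($\mu<\nu$, where $E^s_{\mu\nu}$ has entries $1/\sqrt2$ at $(\mu,\nu)$ and $(\nu,\mu)$). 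Linearity of $F_k$ in its matrix argument gives $F_k(\Phi(\cdot,x)E^s_{\mu\nu})_{ij}=\tfrac1{\sqrt2}\bigl[F_k(\Phi(\cdot,x)_{\cdot,\cdot,\mu,\nu})_{ij}+F_k(\Phi(\cdot,x)_{\cdot,\cdot,\nu,\mu})_{ij}\bigr]$, and the extra $1/\sqrt2$ from $E^s_{\mu\nu}$ itself combines with this to produce the factor $\tfrac12$ and a sum over both orderings of the off-diagonal index pair, which is \eqref{form1}.

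The third step is the alternative expression \eqref{form2}, which uses the special kernel \eqref{phi-tensor}. Here I would regroup \eqref{form1} by collecting, for each entry position $E_{\mu\nu}$, the coefficient of $F_k(\Phi(\cdot,x)_{\cdot,\cdot,\mu,\nu})_{ij}$. Defining $\beta_k^{(i,i)}=\gamma_k^{(i,i)}$ and $\beta_k^{(i,j)}=\beta_k^{(j,i)}=\tfrac12\gamma_k^{(i,j)}$ for $i\ne j$ turns the diagonal ($i=j$) and paired off-diagonal ($i<j$) terms of \eqref{form1} into an unrestricted double sum $\sum_{i,j=1}^n\beta_k^{(i,j)}$; the point is that in \eqref{form1} the pair $(i,j)$ with $i<j$ appears once with weight $\gamma_k^{(i,j)}$ while in \eqref{form2} it is counted twice (as $(i,j)$ and $(j,i)$) with weight $\tfrac12\gamma_k^{(i,j)}$ each, and by the symmetry relation \eqref{F:symm} of Lemma~\ref{le} the $(j,i)$ term equals the $(i,j)$ term with $\mu,\nu$ swapped, so the two bookkeeping schemes agree. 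Finally the matrices $\beta_k$ are symmetric by construction. The main obstacle is purely the careful index bookkeeping in steps two and three — matching the $1/\sqrt2$ normalisations of the $E^s_{\mu\nu}$ basis against the $1/2$ and doubled-sum conventions, and invoking \eqref{F:symm} at exactly the right place to see that the two forms coincide; there is no analytic difficulty beyond what the cited theorems already provide.
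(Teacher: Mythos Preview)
Your proposal is correct and follows essentially the same route as the paper: invoke Theorem~\ref{th:mini} (via Corollary~\ref{cor:Riesz}) together with the linear independence from Theorem~\ref{th:errordynsys}, expand the Riesz representers \eqref{Rieszr} in the $E^s_{\mu\nu}$ basis with the diagonal/off-diagonal split to obtain \eqref{form1}, and then use the symmetry identity \eqref{F:symm} of Lemma~\ref{le} and the definition of $\beta_k$ to pass to the unrestricted double sums in \eqref{form2}. The paper carries out the last step by a slightly more explicit case distinction (comparing coefficients of each $E_{\mu\nu}$ and splitting further into $i=j$ versus $i\neq j$), but the content is identical to your bookkeeping argument.
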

\begin{proof}
The first formula follows from Corollary \ref{cor:Riesz} as
by \eqref{Rieszr}, the Riesz
representers are given by
\[
v_{\lambda_k^{(i,j)}}(x) = \sum_{1\le \mu\le \nu\le n}
\lambda_{k}^{(i,j)}(\Phi(\cdot,x)E^s_{\mu\nu})E^s_{\mu\nu}.
\]
By \eqref{action} we have
\[\left(\Phi(\cdot,x)E^s_{\mu\nu}\right)_{ij}
=
\sum_{k,\ell=1}^n \Phi(\cdot,x)_{ijk\ell} (E^s_{\mu\nu})_{k\ell}.
\]
For $\mu=\nu$ we have
\[\lambda_{k}^{(i,j)}(\Phi(\cdot,x)E^s_{\mu\mu})E^s_{\mu\mu}
=
F_k(\Phi(\cdot,x)_{\cdot,\cdot,\mu,\mu})_{ij}E_{\mu\mu}.
\]
For $\mu<\nu$ we have
\begin{eqnarray*}\lambda_{k}^{(i,j)}(\Phi(\cdot,x)E^s_{\mu\nu})E^s_{\mu\nu}
&=&\frac{1}{\sqrt{2}}\left(
  F_k(\Phi(\cdot,x)_{\cdot,\cdot,\mu,\nu})_{ij}+F_k(\Phi(\cdot,x)_{\cdot,\cdot,\nu,\mu})_{ij}
  \right)\frac{1}{\sqrt{2}}(E_{\mu\nu}+E_{\nu\mu}) 
\\
&=&\frac{1}{2}\left( F_k(\Phi(\cdot,x)_{\cdot,\cdot,\mu,\nu})_{ij}+F_k(\Phi(\cdot,x)_{\cdot,\cdot,\nu,\mu})_{ij}\right)(E_{\mu\nu}+E_{\nu\mu}).
\end{eqnarray*}
Hence, we have
\begin{eqnarray*}
v_{\lambda_k^{(i,j)}}(x) &=& \sum_{ \mu=1}^n
F_k(\Phi(\cdot,x)_{\cdot,\cdot,\mu,\mu})_{ij}E_{\mu\mu}\\ 
&&\mbox{} +
\frac{1}{2}
\sum_{\substack{ \mu,\nu=1\\\mu\not =\nu}}^n
[F_k(\Phi(\cdot,x)_{\cdot,\cdot,\mu,\nu})_{ij}+
F_k(\Phi(\cdot,x)_{\cdot,\cdot,\nu,\mu})_{ij}]E_{\mu\nu},
\end{eqnarray*}
which shows \eqref{form1}. To show \eqref{form2}, note that
by  \eqref{F:symm} we have
\begin{equation}\label{help}
F_k(\Phi(\cdot,x)_{\cdot,\cdot,\mu,\nu})_{ij}
=F_k(\Phi(\cdot,x)_{\cdot,\cdot,\nu,\mu})_{ji}. 
\end{equation}
To show \eqref{form2} it suffices to establish

\begin{multline*}
\sum_{i,j=1}^n\beta_k^{(i,j)} \sum_{\mu,\nu=1}^n
  F_k(\Phi(\cdot,x)_{\cdot,\cdot,\mu,\nu})_{ij} E_{\mu\nu}
=
 \sum_{ \mu=1}^n
\sum_{1\le i\le j\le n} \gamma_k^{(i,j)}
F_k(\Phi(\cdot,x)_{\cdot,\cdot,\mu,\mu})_{ij}E_{\mu\mu}\\
\mbox{} +
\sum_{\substack{ \mu,\nu=1\\\mu\not =\nu}}^n\sum_{1\le i\le j\le n} \gamma_k^{(i,j)}
\frac{1}{2}
\left[F_k(\Phi(\cdot,x)_{\cdot,\cdot,\mu,\nu})_{ij}+
F_k(\Phi(\cdot,x)_{\cdot,\cdot,\nu,\mu})_{ij}\right]E_{\mu\nu}
\end{multline*}
for $1\le k\le N$. We compare the expressions on both sides above for
each $E_{\mu\nu}$. For $\mu=\nu$ we have to show
\[
\sum_{i,j=1}^n\beta_k^{(i,j)} F_k(\Phi(\cdot,x)_{\cdot,\cdot,\mu,\mu})_{ij}
=
\sum_{1\le i\le j\le n} \gamma_k^{(i,j)} F_k(\Phi(\cdot,x)_{\cdot,\cdot,\mu,\mu})_{ij}.
\]
This is true, since for $i=j$ we have $\gamma_k^{(i,i)}=\beta_k^{(i,i)}$ and for
$i\not=j$ we have $F_k(\Phi(\cdot,x)_{\cdot,\cdot,\mu,\mu})_{ij} =
F_k(\Phi(\cdot,x)_{\cdot,\cdot,\mu,\mu})_{ji} $ by \eqref{help} and
$\frac{1}{2}\gamma_k^{(i,j)}=\beta_k^{(i,j)}=\beta_k^{(j,i)}$.

For $\mu\not=\nu$ we have to show
\begin{multline*}
\sum_{i,j=1}^n\beta_k^{(i,j)} F_k(\Phi(\cdot,x)_{\cdot,\cdot,\mu,\nu})_{ij}\\
=
\frac{1}{2}
\sum_{1\le i\le j\le n} \gamma_k^{(i,j)}
\left[
F_k(\Phi(\cdot,x)_{\cdot,\cdot,\mu,\nu})_{ij}+
F_k(\Phi(\cdot,x)_{\cdot,\cdot,\nu,\mu})_{ij}\right].
\end{multline*}
Again, this is shown using \eqref{help}  since for $i=j$ we have
$F_k(\Phi(\cdot,x)_{\cdot,\cdot,\mu,\nu})_{ii}
=F_k(\Phi(\cdot,x)_{\cdot,\cdot,\nu,\mu})_{ii} $ and
$\gamma_k^{(i,i)}=\beta_k^{(i,i)}$ and for 
$i\not=j$ we have $F_k(\Phi(\cdot,x)_{\cdot,\cdot,\mu,\nu})_{ji}
=F_k(\Phi(\cdot,x)_{\cdot,\cdot,\nu,\mu})_{ij} $ and
$\frac{1}{2}\gamma_k^{(i,j)}=\beta_k^{(i,j)}=\beta_k^{(j,i)}$. 
\end{proof}

The error estimate from Theorem \ref{th:error}, or more precisely
from Theorem \ref{th:errordynsys}, gives together with Theorem
\ref{est} our final result.

\begin{theorem} \label{thmfinal}
Let $f\in C^{\lceil\sigma\rceil}(\R^n;\R^n)$, $\sigma>n/2+1$. Let $x_0$ be an
exponentially stable equilibrium of $\dot x =f(x)$ with basin of
attraction $A(x_0)$. Let $C\in\S^{n\times n}$ be a positive definite
(constant) matrix and let $M\in C^{\sigma}(A(x_0),\S^{n\times n})$ be
the solution of (\ref{pde-const}) from Theorem \ref{est}. Let
$K\subseteq \Omega\subseteq A(x_0)$ be a positively invariant and
compact set, where $\Omega$ is open with Lipschitz boundary. Finally, let
$S$ be the optimal recovery from Theorem \ref{th:S}. Then, we have the error
estimate 
\[
\|F-S\|_{L_\infty(K;\S^{n\times n})} \le
  c\|F(M)-F(S)\|_{L_\infty(\Omega;\S^{n\times n})} \le C
  h_{X,\Omega}^{\sigma-1-n/2} 
\|M\|_{H^\sigma(\Omega;\S^{n\times n})}.
\]
for all $X\subseteq\Omega$ with sufficiently small $h_{X,\Omega}$. In
particular, $S$ itself is a contraction metric in $K$ provided $h_{X,\Omega}$ is
sufficiently small.
\end{theorem}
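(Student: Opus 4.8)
The plan is to assemble the result from three pieces already available: the abstract convergence estimate of Theorem~\ref{th:errordynsys} (used with $d=n$ and $V:=Df$), the well-posedness and explicit shape of the optimal recovery in Theorem~\ref{th:S}, and the conditional stability estimate of Theorem~\ref{est}. I would first check that Theorem~\ref{th:errordynsys} applies. We may assume $\Omega$ is chosen so that $\overline{\Omega}$ is a compact subset of $A(x_0)$ with Lipschitz boundary; then $f\in C^{\lceil\sigma\rceil}(\R^n;\R^n)$ restricts to elements $f\in H^{\sigma-1}(\Omega;\R^n)$ and $V=Df\in H^{\sigma-1}(\Omega;\R^{n\times n})$, and $\sigma>n/2+1$. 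The unique equilibrium of $\dot x=f(x)$ in $A(x_0)$ is $x_0$, and exponential stability forces every eigenvalue of $V(x_0)=Df(x_0)$ to have negative real part, so the eigenvalue hypothesis of Theorem~\ref{th:errordynsys} holds; moreover $M\in C^{\sigma}(A(x_0),\S^{n\times n})$ restricts to $M\in H^{\sigma}(\Omega;\S^{n\times n})$. Because $F(M)\equiv -C$, the data are $\lambda_k^{(i,j)}(M)=e_i^TF(M)(x_k)e_j=-C_{ij}$, so the function $S$ produced by Theorem~\ref{th:S} is exactly the optimal recovery of $M$ in the sense of Definition~\ref{optrec}. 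Theorem~\ref{th:errordynsys} then gives directly
\[
\|F(M)-F(S)\|_{L_\infty(\Omega;\S^{n\times n})}\le C\,h_{X,\Omega}^{\sigma-1-n/2}\,\|M\|_{H^\sigma(\Omega;\S^{n\times n})},
\]
which is the second inequality in the claim.

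For the first inequality I would exploit that $K$ is positively invariant and compact, so $\overline{\gamma^+(K)}=K\subseteq\Omega$. Setting $\epsilon:=\|F(M)-F(S)\|_{L_\infty(\Omega;\S^{n\times n})}$ we have $\|F(M)-F(S)\|_{L_\infty(K;\S^{n\times n})}\le\epsilon$, and the consequence of Theorem~\ref{est} recorded in the remark following it (namely that a bound $\epsilon$ on $F(M)-F(S)$ over $\overline{\gamma^+(K)}$ yields $\|M-S\|_{L_\infty(K)}\le c\,\epsilon$) produces $\|M-S\|_{L_\infty(K;\S^{n\times n})}\le c\,\epsilon$. Chaining the two estimates gives the full inequality chain of the theorem (with ``$F$'' on the far left read as $M$).

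It remains to justify the final ``in particular''. By the Sobolev embedding theorem ($\sigma>n/2+1$) we have $S\in H^{\sigma}(\Omega;\S^{n\times n})\subseteq C^1(\overline{\Omega};\S^{n\times n})$, so $S$ is $C^1$ and symmetric on $K$. On the compact set $K$ the metric $M$ is uniformly positive definite and $-C$ is uniformly negative definite. Since both $\|M-S\|_{L_\infty(K)}$ and $\|F(S)+C\|_{L_\infty(K)}=\|F(S)-F(M)\|_{L_\infty(K)}$ are $O\big(h_{X,\Omega}^{\sigma-1-n/2}\big)$ by the estimate just proved, for all sufficiently small $h_{X,\Omega}$ the matrix $S(x)$ is positive definite and $F(S)(x)=Df^T(x)S(x)+S(x)Df(x)+S'(x)$ is negative definite at every $x\in K$. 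Thus $S$ satisfies the two defining properties of a Riemannian contraction metric on $K$ (cf.\ Theorem~\ref{th:1}).

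The delicate point — and the step I expect to need the most care — is the use of Theorem~\ref{est}: as stated, that theorem concerns exact solutions of the matrix PDE on all of $A(x_0)$ with positive-definite right-hand sides, whereas $S$ is only the optimal recovery and lives only on $\Omega$. The argument must instead rely on the linear (difference-equation) form of the stability estimate recorded after Theorem~\ref{est}, and it is precisely the positive invariance of $K$ that collapses $\overline{\gamma^+(K)}$ to $K$, so that only values of $F(M)-F(S)$ on $K$ — where both $M$ and $S$ are defined — are needed. Pinning down this reduction, together with the threshold on $h_{X,\Omega}$ that guarantees positive/negative definiteness on the compact set $K$, is the only non-routine part; everything else is bookkeeping with the results already established.
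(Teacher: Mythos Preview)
Your proof is correct and follows essentially the same approach as the paper: invoke Theorem~\ref{th:errordynsys} with $V=Df$ for the second inequality, Theorem~\ref{est} (via the remark after it, using $\overline{\gamma^+(K)}=K$) for the first, and then argue that smallness of $h_{X,\Omega}$ preserves positive/negative definiteness on the compact set $K$. The paper makes this last step slightly more explicit via the eigenvalue-perturbation bound $|\lambda_1(x)-\mu_1(x)|\le\|M(x)-S(x)\|$, but the content is identical, and you are right to flag the application of Theorem~\ref{est} to $S$ as the only point requiring care.
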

\begin{proof}
The error estimates and the linear independence of the $\lambda_k^{(i,j)}$
follow immediately from Theorem
\ref{th:errordynsys} with $V(x)=Df(x)\in
H^{\sigma-1}(\Omega;\R^{n\times n})$. To see that $S$ 
itself defines a contraction metric, we have to verify that $S$ is positive
definite and $F(S)$ is negative definite. We will do this only for $S$
as the proof for $F(S)$ is almost identical. The main idea here is
that the eigenvalues of symmetric matrix depend continuously on the
matrix values. To  be more precise, since  $M(x)$ is positive
definite for every 
$x\in K$ all its eigenvalues $\lambda_j(x)$, $1\le j\le n$ are
positive. If we order them by size, i.e. $0<\lambda_1(x)\le
\lambda_2(x)\le \ldots \lambda_n(x)$, then we have for $x,y\in K$, 
\[
|\lambda_j(x)-\lambda_j(y)|\le \|M(x)-M(y)\|
\]
for any natural matrix norm. Since $M$ is continuous, so is each
function $\lambda_j$. Since $K$ is compact, there is a
$\lambda_{\min}$ such that  $\lambda_j(x)\ge \lambda_{\min}>0$ for all
$1\le j\le n$ and all $x\in K$. If we now sort the eigenvalues
$\mu_j(x)$ of $S(x)$ in the same way, similar arguments as above show
\[
|\lambda_1(x)-\mu_1(x)|\le \|M(x)-S(x)\|\le C
h_{X,\Omega}^{\sigma-1-n/2}\|M\|_{H^\sigma(\Omega;\S^{n\times n})}
\]
Hence, if we choose $h_{X,\Omega}$ so small that the term on the right-hand
side becomes less then $\lambda_{\min}/2$, we see that
$\mu_1(x) \ge\lambda_{\min}/2$ for all $x\in K$, i.e. $S(x)$ is
also positive definite for all $x\in K$.
\end{proof}

While this result guarantees that $S(x)$ is eventually positive
definite for all $x\in K$, it does not provide us with an a priori
estimate on how small $h_{X,\Omega}$ actually has to be since we neither
know the constant $C>0$ nor the norm of the unknown function $M$.
Hence, in applications, we have to verify the positive definiteness
differently. 

\section{Example}
\label{sec4}

As an example we consider the linear system
\[
\dot{x}=-x+y, \qquad 
\dot{y}=x-2y,
\]
which was considered in \cite{Giesl-Hafstein-13-1} as a time-periodic example.
Note that the solution of the matrix equation (\ref{pde-const}) with
$C=-I$ is constant and can be easily calculated as
\[
M(x)=\left(\begin{array}{cc} 1&\frac{1}{2}\\[0.2cm]
  \frac{1}{2}&\frac{1}{2}\end{array}\right),
\]
which allows us to analyse the error to the exact solution.
Also note that any set of the form $K_c=[-c,c]^2$  with $c>0$ is
positively invariant. 
We have used grids of the form
$X_\alpha=\{(x,y)\in \R^2 :
x,y=-1,\ldots,-2\alpha,-\alpha,0,\alpha,2\alpha,\ldots,1\}$ with
$\alpha=1,\frac{1}{2},\frac{1}{2^2},\ldots,\frac{1}{2^5}$. 

As the RBF we have used  Wendland's $C^8(\R^2)$ function 
\[
\phi(r)=(1-cr)^{10} (2145 (cr)^4+2250 (c r)^3+1050 (cr)^2 +
250cr+25)_+
\]
 with $c=0.9$ which is a reproducing kernel in
$H^\sigma(\R^2)$ with $\sigma=5.5$, see \cite{Wendland-05-1}.

In each case we have calculated the errors
\begin{eqnarray*} 
e_\alpha&=&\max_{x\in X_{check}}\|S^\alpha(x)-M(x)\|_{\max}=\max_{x\in
  X_{check}}\max_{i,j=1,2}|S^\alpha_{ij}(x)-M_{ij}(x)|\\
e^s_\alpha&=&\max_{x\in  X_{check}}\|F(S^\alpha)(x)-F(M)(x)\|_{\max},\\
\end{eqnarray*}
with $X_{check}=\{(x,y)\in \R^2 :
x,y=-1+\frac{1}{2}\alpha_0,\ldots,-\frac{3}{2} \alpha_0,-\frac{1}{2}
\alpha_0,\frac{1}{2}\alpha_0,\frac{3}{2}\alpha_0,\ldots,1-\frac{1}{2}\alpha_0\}$
with $\alpha_0=\frac{1}{2^6}$. By Theorem \ref{thmfinal} we expect the
errors to 
behave like 
\[
\frac{e_{2\alpha}}{e_{\alpha}}\approx 2^{\sigma-1-n/2} = 2^{3.5}.
\]

Table \ref{table1} shows the above described errors for different
$\alpha$ as well as the expected ratios. 

\begin{table}[ht]
\begin{center}
\begin{tabular}{l||l|l||l|l}
$\alpha$ & $e_\alpha^s$ & $e_{2\alpha}^s/e_{\alpha}^s$ & $e_\alpha$
  &$e_{2\alpha}/e_{\alpha}$ \\\hline
1/2      & 2.5724     &         &  1.2334      &\\
1/4      & 1.2833     & 2.0045  &  0.9169      & 1.3452 \\
1/8      & 0.3516     & 3.6499  &  0.0124      & 73.9435 \\
1/16     & 0.0329     & 10.6838 &  5.6040e-4   & 22.1271 \\ 
1/32     & 0.0025     & 13.1918 &  1.6311e-5   & 34.3572         \\\hline
$2^{3.5}$   &            & 11.3137 &              & 11.3137 \\
\end{tabular}
\end{center}
\caption{Errors for various computation grids together with the error
  behaviour\label{table1}.}
\end{table}

Finally, we have fixed the grid to 
$X=\{(x,y)\in \R^2 : x,y=-4,-3.8,-3.6,\ldots,0,0.2,\ldots,4\}$ with
$N=1681$ points, and as each grid point requires $3$ variables of a
symmetric $2\times 2$ matrix, we solve a linear system with a
$5043\times 5043$ matrix. 
We need to check that the constructed matrix-valued function $S(x)$ is positive definite and $F(S)(x)$ is negative definite, where
$F(S)=Df(x)^T S(x)+S(x)Df(x)+S'(x)$. To check that a $2\times 2$
matrix $A$ is positive/negative definite it suffices to check that $\tr (A)$ is
positive/negative and $\det (A)$ is positive/$-\det (A)$ is
negative. The figures show $\tr F(S)(x)$, $-\det F(S)(x)$, which are
negative apart from small areas (Figure \ref{fig1_linear}), as well as
$\tr S(x)$, $\det S(x)$ which are positive (Figure
\ref{fig2_linear}). Figure \ref{fig3_linear}, left,  summarises the
results by displaying the grid points and the areas where the above
are zero. Figure \ref{fig3_linear}, right, illustrates the metric
$S(x)$ by plotting ellipses $\bx+\bv$ around $\bx$ with
$(\bv-\bx)^TS(\bx)(\bv-\bx)=$const. 

\begin{figure}[h!]
\includegraphics[height=6.75cm,width=6.75cm]{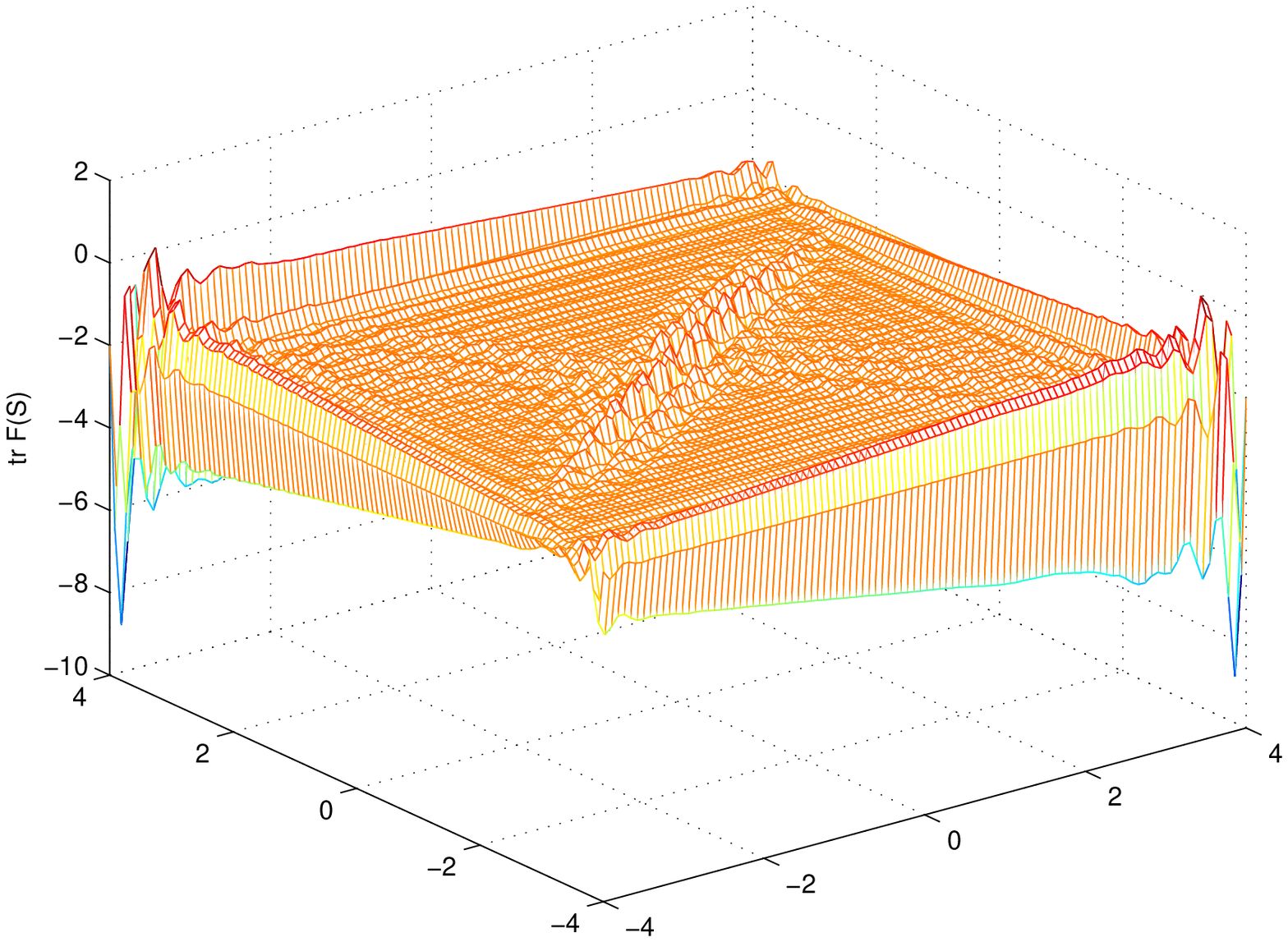}
\includegraphics[height=6.75cm,width=6.75cm]{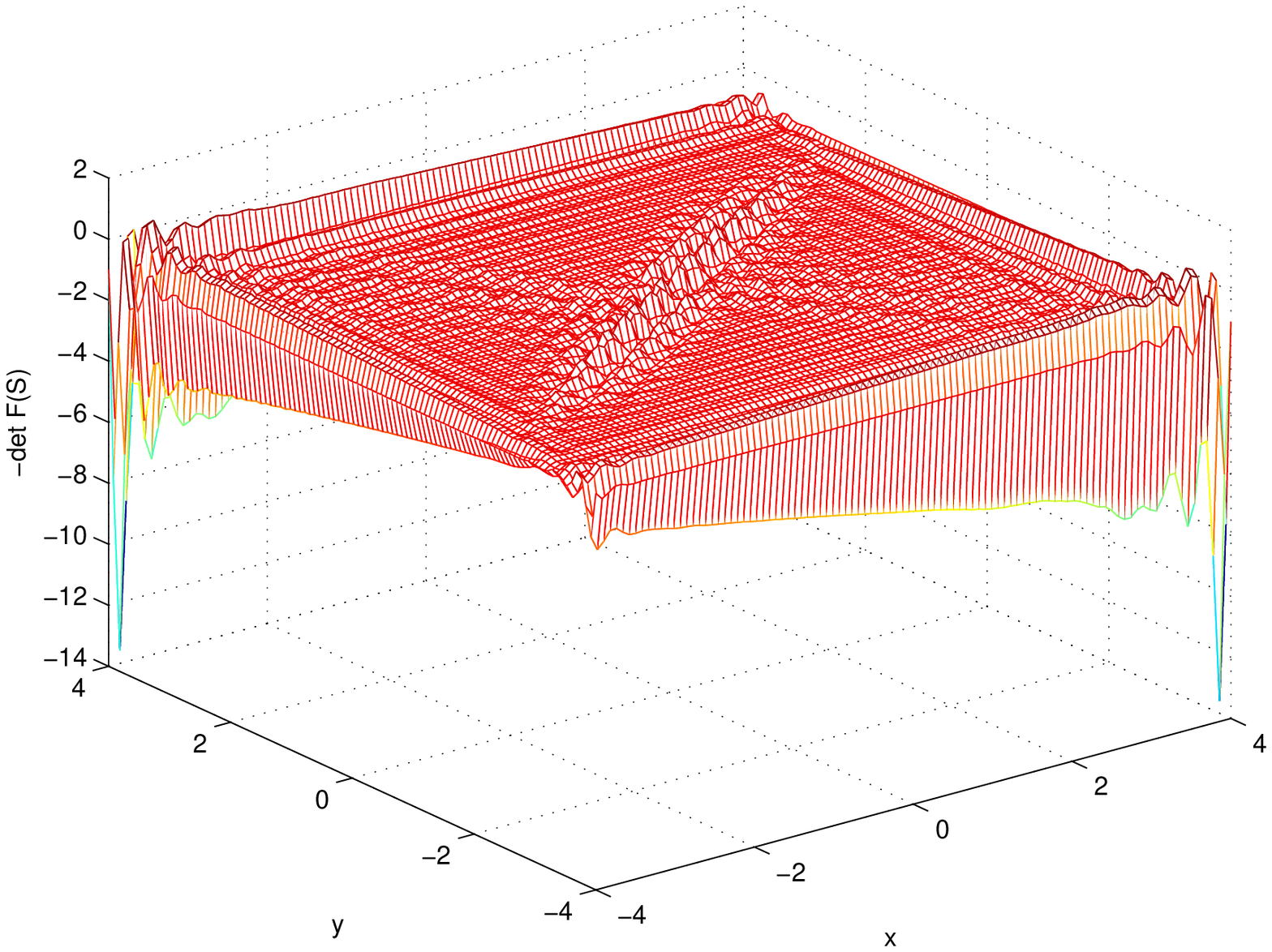}
\caption{Left: $\tr F(S)(x,y)$, right: $-\det F(S)(x,y)$. If both
  functions are negative, then $F(S)(x,y)$ is negative
  definite.}\label{fig1_linear} 
 \end{figure}

\begin{figure}[h!]
\includegraphics[height=6.75cm,width=6.75cm]{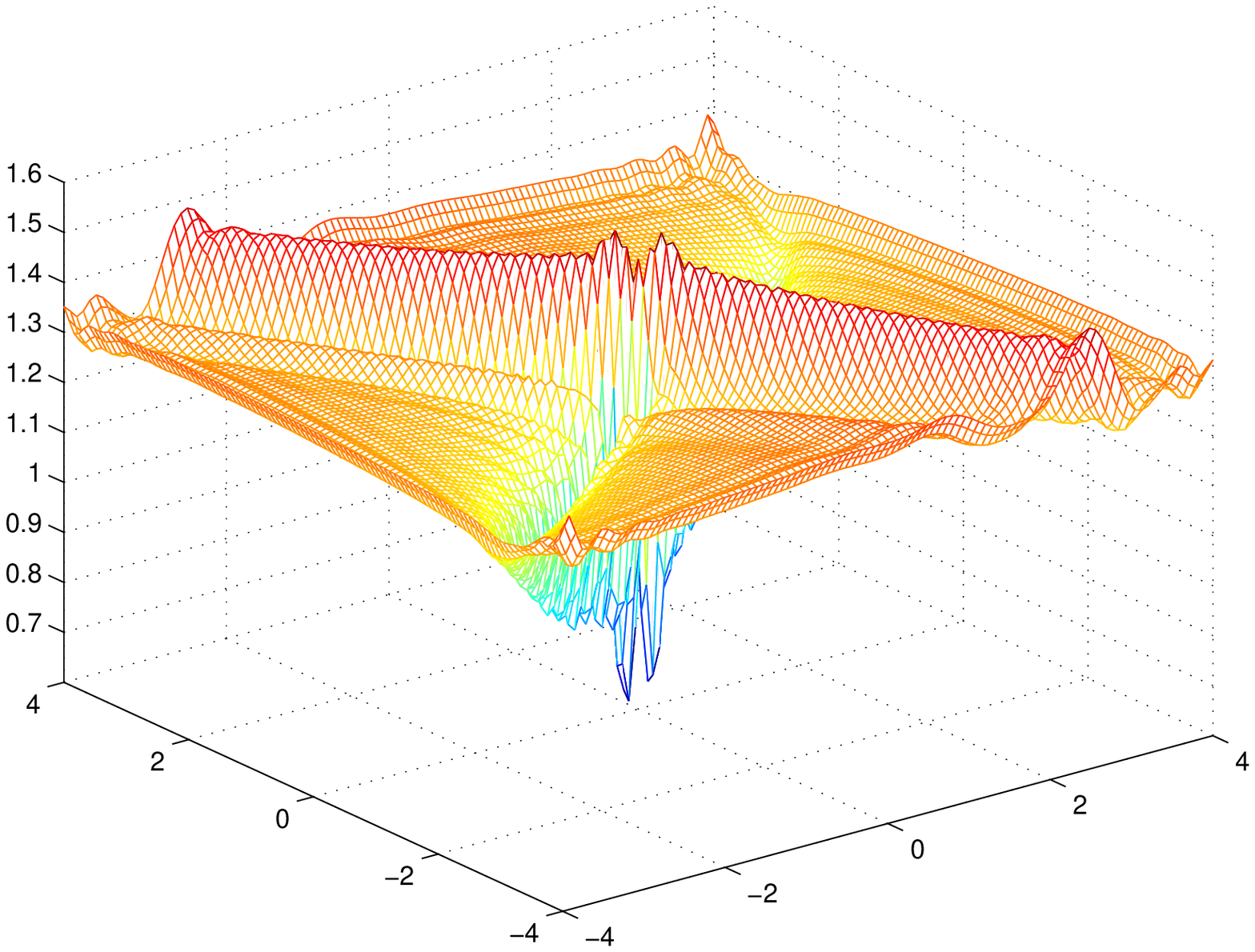}
\includegraphics[height=6.75cm,width=6.75cm]{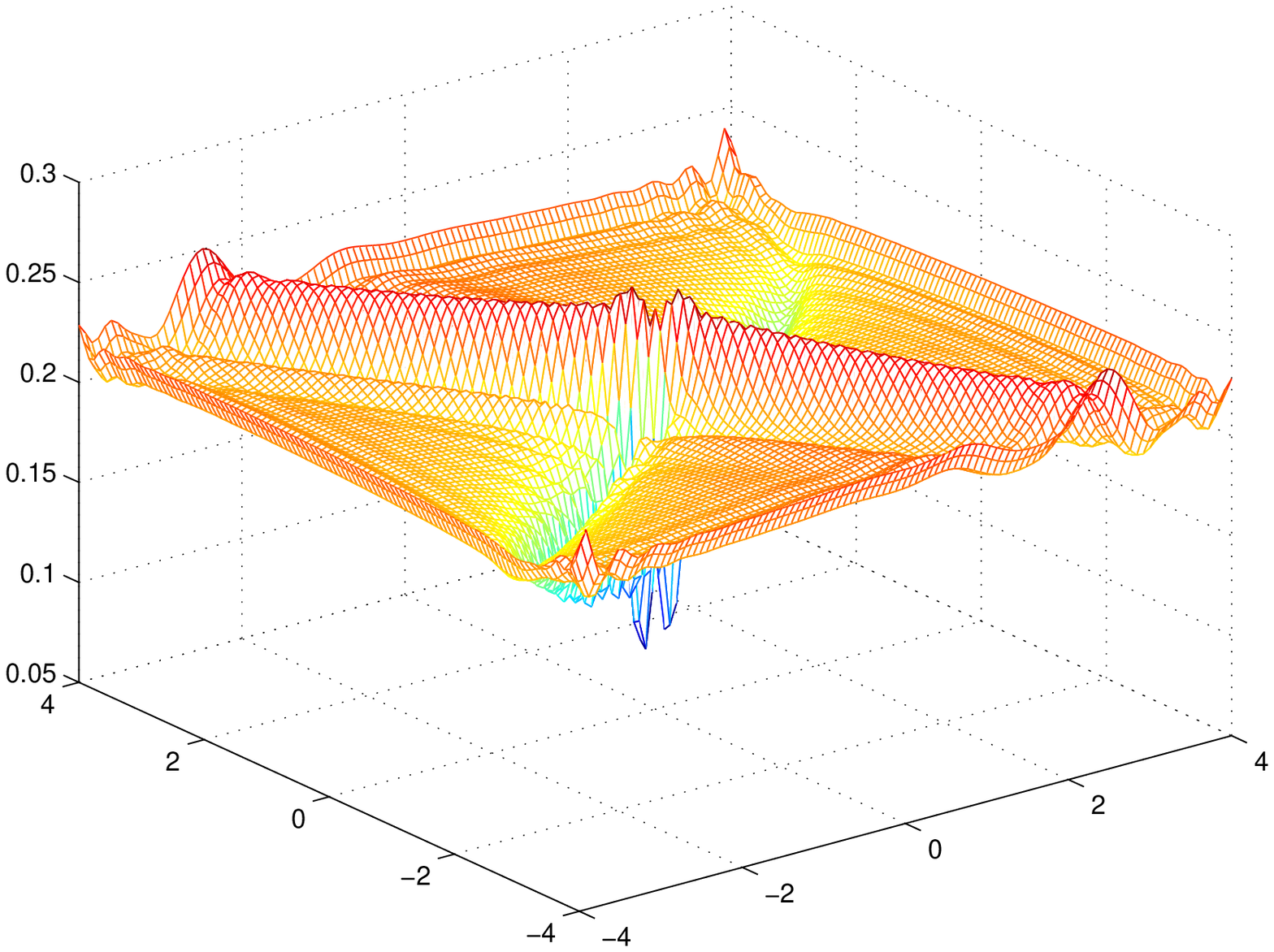}
\caption{Left: $\tr S(x,y)$, right: $\det S(x,y)$. If both functions
are positive, then $S(x,y)$ is positive definite.}\label{fig2_linear} 
\end{figure}

\begin{figure}[h!]
\includegraphics[height=6.75cm,width=6.75cm]{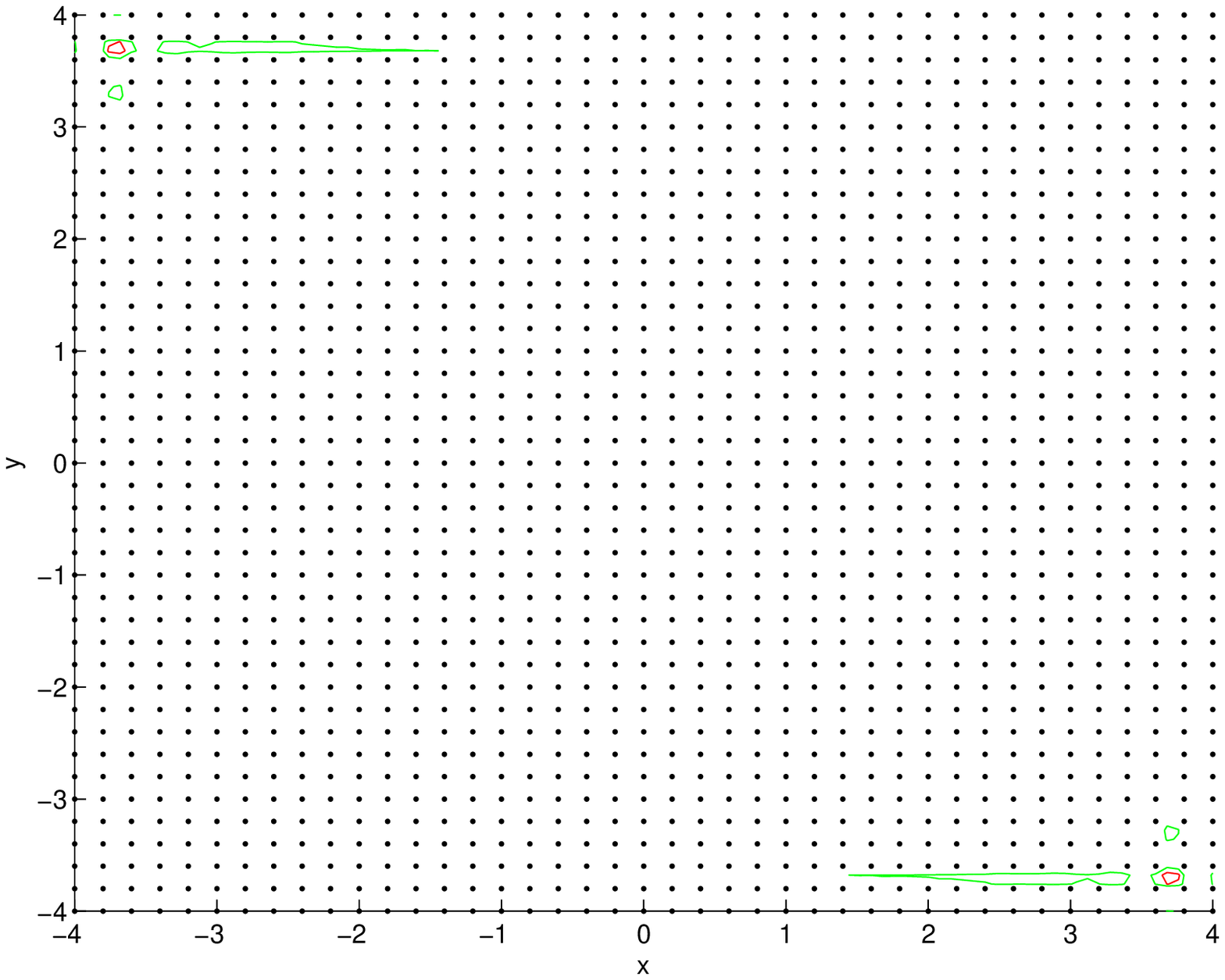}
\includegraphics[height=6.75cm,width=6.75cm]{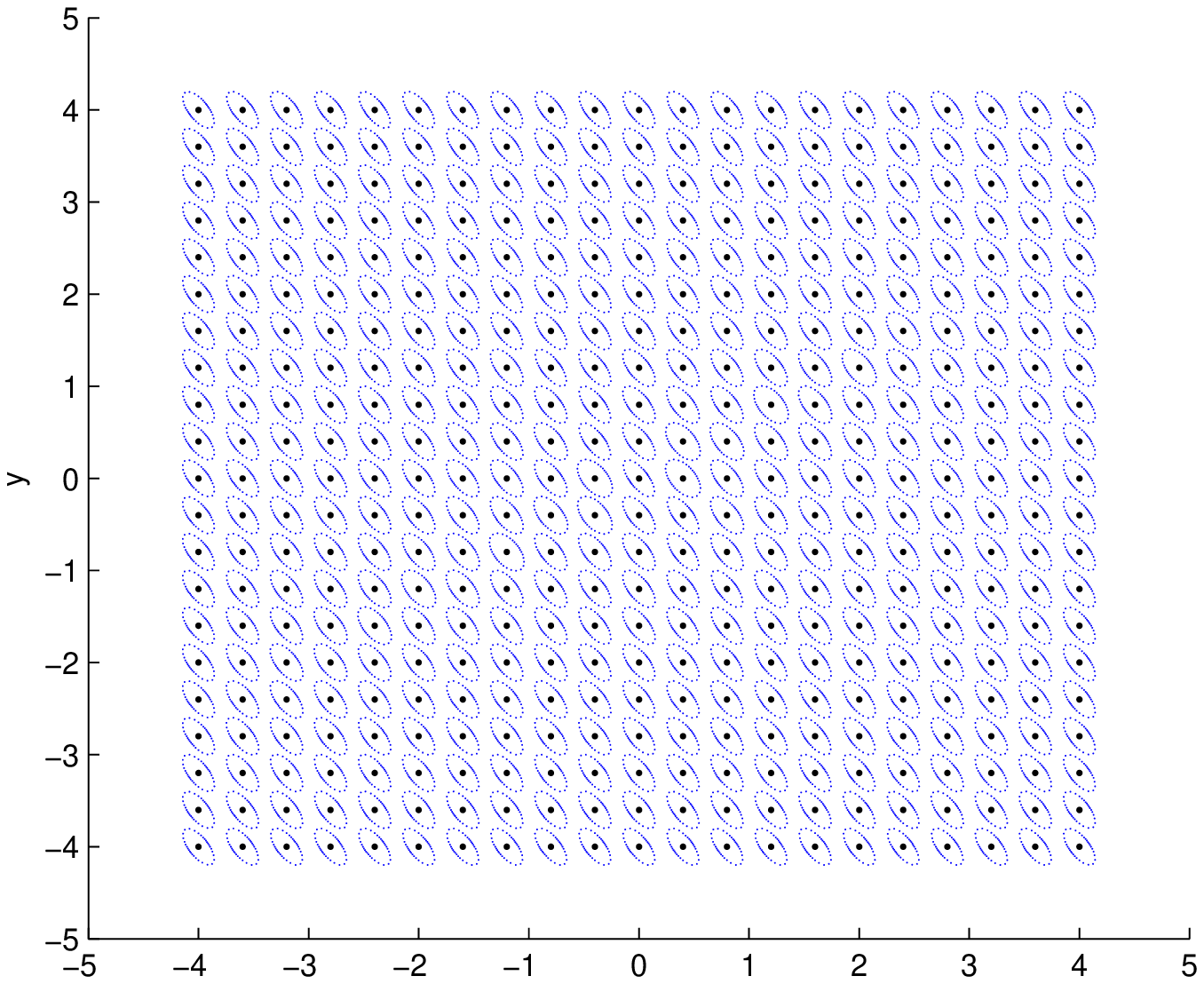}
\caption{Left: The points used for the RBF approximation together with
  the areas where $\tr F(S)(x,y)=0$ (red) and $\det F(S)(x,y)=0$
  (green). 
Right: To illustrate the approximation $S$, around some points $\bx$,
we have plotted the curve of equal distance with respect to metric
$S(\bx)$, in particular the set $\{\bx+\bv\mid
(\bv-\bx)^TS(\bx)(\bv-\bx)={\rm const}\}$.}\label{fig3_linear} 
\end{figure}

\bibliographystyle{siam}
\bibliography{../../../Utilities/rbf}

\begin{thebibliography}{10}

\bibitem{Amodei-97-1}
{\sc L.~Amodei}, {\em Reproducing kernels of vector-valued function spaces}, in
  Surface Fitting and Multiresolution Methods, A.~L. M\'ehaut'e, C.~Rabut, and
  L.~L. Schumaker, eds., Nashville, 1997, Vanderbilt University Press,
  pp.~17--26.

\bibitem{Aronszajn-50-1}
{\sc N.~Aronszajn}, {\em Theory of reproducing kernels}, Trans.\ Am.\ Math.\
  Soc., 68 (1950), pp.~337--404.

\bibitem{Aylward-etal-08-1}
{\sc E.~M. Aylward, P.~A. Parrilo, and J.-J. Slotine}, {\em Stability and
  robustness analysis of nonlinear systems via contraction metrics and {SOS}
  programming}, Automatica, 44 (2008), pp.~2163--2170.

\bibitem{Benbourhim-Bouhamidi-10-1}
{\sc M.~N. Benbourhim and A.~Bouhamidi}, {\em Meshless pseudo-polyharmonic
  divergence-free and curl-free vector fields approximation}, SIAM J.\ Math.\
  Anal., 42 (2010), pp.~1218--1245.

\bibitem{Berlinet-Thomas-Agnan-04-1}
{\sc A.~Berlinet and C.~Thomas-Agnan}, {\em Reproducing Kernel Hilbert Spaces
  in Probability and Statistics}, Springer, New York, 2004.

\bibitem{Borg-60-1}
{\sc G.~Borg}, {\em A condition for the existence of orbitally stable solutions
  of dynamical systems}, vol.~153 of Kungl. Tekn. H{\"o}gsk. Handl., Elander,
  1960.

\bibitem{Buhmann-03-1}
{\sc M.~D. Buhmann}, {\em Radial Basis Functions}, Cambridge Monographs on
  Applied and Computational Mathematics, Cambridge University Press, Cambridge,
  2003.

\bibitem{Cristianini-Shawe-Taylor-00-1}
{\sc N.~Cristianini and J.~Shawe-Taylor}, {\em An introduction to support
  vector machines and other kernel-based learning methods}, Cambridge
  University Press, Cambridge, 2000.

\bibitem{Cucker-Smale-02-1}
{\sc F.~Cucker and S.~Smale}, {\em On the mathematical foundation of learning},
  Bull.\ Amer.\ Math.\ Soc., 39 (2002), pp.~1--49.

\bibitem{Dick-etal-13-1}
{\sc J.~Dick, F.~Y. Kuo, and I.~H. Sloan}, {\em High-dimensional integration:
  The quasi-{M}onte {C}arlo way}, in Acta Numerica, A.~Iserles, ed., vol.~22,
  Cambridge University Press, 2013, pp.~133--288.

\bibitem{Fasshauer-07-1}
{\sc G.~Fasshauer}, {\em Meshfree Approximation Methods with {MATLAB}}, World
  Scientific Publishers, Singapore, 2007.

\bibitem{Flyer-Fornberg-11-1}
{\sc N.~Flyer and B.~Fornberg}, {\em Radial basis functions: Developments and
  applications to planetary scale flows}, Computers and Fluids, 46 (2011),
  pp.~23--32.

\bibitem{Fornberg-Flyer-15-1}
{\sc B.~Fornberg and N.~Flyer}, {\em Solving {PDEs} with radial basis
  functions}, in Acta Numerica, A.~Iserles, ed., vol.~24, Cambridge University
  Press, 2015, pp.~215--258.

\bibitem{Forni-Sepulchre-14-1}
{\sc F.~Forni and R.~Sepulchre}, {\em A differential {L}yapunov framework for
  contraction analysis}, IEEE Transactions on Automatic Control, 59 (2014),
  pp.~614--628.

\bibitem{Franke-Schaback-98-2}
{\sc C.~Franke and R.~Schaback}, {\em Convergence order estimates of meshless
  collocation methods using radial basis functions}, Adv.\ Comput.\ Math., 8
  (1998), pp.~381--399.

\bibitem{Fuselier-08-1}
{\sc E.~Fuselier}, {\em Sobolev-type approximation rates for divergence-free
  and curl-free {RBF} interpolants}, Math.\ Comput., 77 (2008), pp.~1407--1423.

\bibitem{Giesl-07-1}
{\sc P.~Giesl}, {\em Construction of global {L}yapunov functions using radial
  basis functions}, vol.~1904 of Lecture Notes in Mathematics, Springer-Verlag,
  Heidelberg, 2007.

\bibitem{Giesl-15-1}
\leavevmode\vrule height 2pt depth -1.6pt width 23pt, {\em Converse theorems on
  contraction metrics for an equilibrium}, J. Math. Anal. Appl., 424 (2015),
  pp.~1380--1403.

\bibitem{Giesl-Hafstein-13-1}
{\sc P.~Giesl and S.~Hafstein}, {\em Construction of a {CPA} contraction metric
  for periodic orbits using semidefinite optimization}, Nonlinear Anal., 86
  (2013), pp.~114--134.

\bibitem{Giesl-Hafstein-15-1}
{\sc P.~Giesl and S.~Hafstein}, {\em Computation and verification of {L}yapunov
  functions}, SIAM J. Appl. Dyn. Syst., 14 (2015), p.~1663–1698.

\bibitem{Giesl-Hafstein-15-2}
\leavevmode\vrule height 2pt depth -1.6pt width 23pt, {\em Review on
  computational methods for {L}yapunov functions}, Discrete and Continuous
  Dynamical Systems - Series B (DCDS-B), 20 (2015), pp.~2291 -- 2331.

\bibitem{Giesl-Wendland-07-1}
{\sc P.~Giesl and H.~Wendland}, {\em Meshless collocation: Error estimates with
  application to dynamical systems}, SIAM J.\ Numer.\ Anal., 45 (2007),
  pp.~1723--1741.

\bibitem{Giesl-Wendland-??-1}
\leavevmode\vrule height 2pt depth -1.6pt width 23pt, {\em Construction of a
  contraction metric by meshless collocation}.
\newblock Preprint Bayreuth/Sussex, 2016.

\bibitem{Hahn-67-1}
{\sc W.~Hahn}, {\em Stability of Motion}, Springer, Berlin, 1967.

\bibitem{Hartman-64-1}
{\sc P.~Hartman}, {\em Ordinary Differential Equations}, Wiley, New York, 1964.

\bibitem{Kansa-90-1}
{\sc E.~J. Kansa}, {\em Multiquadrics - {A} scattered data approximation scheme
  with applications to computational fluid-dynamics {I}. {S}urface
  approximations and partial derivative estimates}, Comput.\ Math.\ Appl., 19
  (1990), pp.~127--145.

\bibitem{Khalil-92-1}
{\sc H.~Khalil}, {\em Nonlinear systems}, Macmillan Publishing Company, New
  York, 1992.

\bibitem{Krasovskii-59-1}
{\sc N.~N. Krasovskii}, {\em Problems of the Theory of Stability of Motion},
  Mir, Moscow, 1959.

\bibitem{Leonov-etal-96-1}
{\sc G.~A. Leonov, I.~M. Burkin, and A.~I.Shepelyavyi}, {\em Frequency Methods
  in Oscillation Theory}, vol.~357 of Ser. Math. and its Appl., Kluwer, 1996.

\bibitem{Lowitzsch-02-1}
{\sc S.~Lowitzsch}, {\em Approximation and Interpolation Employing
  Divergence-Free Radial Basis Functions with Applications}, PhD thesis, Texas
  A\&M University, College Station, USA, 2002.

\bibitem{Micchelli-Pontil-05-1}
{\sc C.~A. Micchelli and M.~Pontil}, {\em Learning the kernel function via
  regularization}, J. of Mach. Learning Research, 6 (2005), pp.~1099--1125.

\bibitem{Narcowich-Ward-94-2}
{\sc F.~J. Narcowich and J.~D. Ward}, {\em Generalized {H}ermite interpolation
  via matrix-valued conditionally positive definite functions}, Math.\ Comput.,
  63 (1994), pp.~661--687.

\bibitem{Narcowich-etal-05-1}
{\sc F.~J. Narcowich, J.~D. Ward, and H.~Wendland}, {\em {S}obolev bounds on
  functions with scattered zeros, with applications to radial basis function
  surface fitting}, Math.\ Comput., 74 (2005), pp.~643--763.

\bibitem{Schaback-11-1}
{\sc R.~Schaback}, {\em The missing {W}endland functions}, Adv.\ Comput.\
  Math., 34 (2011), pp.~67--81.

\bibitem{Schaback-Wendland-06-1}
{\sc R.~Schaback and H.~Wendland}, {\em Kernel techniques: From machine
  learning to meshless methods}, in Acta Numerica, A.~Iserles, ed., vol.~15,
  Cambridge University Press, 2006, pp.~543--639.

\bibitem{Schoelkopf-Smola-02-1}
{\sc B.~Sch\"olkopf and A.~J. Smola}, {\em Learning with Kernels -- Support
  Vector Machines, Regularization, Optimization, and Beyond}, MIT Press,
  Cambridge, Massachusetts, 2002.

\bibitem{Steinwart-Christmann-08-1}
{\sc I.~Steinwart and A.~Christmann}, {\em Support Vector Machines}, Springer,
  2008.

\bibitem{Stenstrom-62-1}
{\sc B.~Stenstr{\"o}m}, {\em Dynamical systems with a certain local contraction
  property}, Math. Scand., 11 (1962), pp.~151--155.

\bibitem{Vapnik-98-1}
{\sc V.~Vapnik}, {\em Statistical Learning Theory}, John Wiley and Sons, 1998.

\bibitem{Wendland-95-1}
{\sc H.~Wendland}, {\em Piecewise polynomial, positive definite and compactly
  supported radial functions of minimal degree}, Adv.\ Comput.\ Math., 4
  (1995), pp.~389--396.

\bibitem{Wendland-99-2}
\leavevmode\vrule height 2pt depth -1.6pt width 23pt, {\em Meshless {G}alerkin
  methods using radial basis functions}, Math.\ Comput., 68 (1999),
  pp.~1521--1531.

\bibitem{Wendland-05-1}
\leavevmode\vrule height 2pt depth -1.6pt width 23pt, {\em Scattered Data
  Approximation}, Cambridge Monographs on Applied and Computational
  Mathematics, Cambridge University Press, Cambridge, UK, 2005.

\bibitem{Wendland-09-1}
\leavevmode\vrule height 2pt depth -1.6pt width 23pt, {\em Divergence-free
  kernel methods for approximating the {S}tokes problem}, SIAM J.\ Numer.\
  Anal., 47 (2009), pp.~3158--3179.

\end{thebibliography}

\end{document}